\theoremstyle{plain}
\newtheorem{theorem}{Theorem}[section]
\newtheorem{proposition}[theorem]{Proposition}
\newtheorem{lemma}[theorem]{Lemma}
\theoremstyle{definition}
\theoremstyle{remark}
\newcommand{\ind}{\mathrm{1}\hskip -3.2pt \mathrm{I}} %fonction indicatrice
\newcommand{\id}{\mbox{\rm Id}}
\def\R{\mathbb R}% tap so thuc
\def\N{\mathbb N}% tap so tu nhien
\def\al{\alpha}% alpha
\def\om{\omega}% omega
\def\Om{\Omega}% Omega
\def\ga{\gamma}% gamma
\def\ge{\geq}% lon hon
\def\lt{\left}% trai
\def\rt{\right}% phai
\def\x{\overline x}% x ngang
\def\tr{\mathop{\rm Tr}\nolimits}% trace of matrix
\def\d{\mathop{\rm div}\nolimits}% divergence
\def\det{\mathop{\rm det}\nolimits}% dinh thuc
\def\vol{\mathop{\rm vol}\nolimits}% the tich
\def\BV{\mathop{\rm BV}\nolimits}% khong gian cac ham bien phan bi chan
\def\supp{\mathop{\rm supp}\nolimits}% gia cua ham hoac do do
\numberwithin{equation}{section}
\title{Sharp weighted Sobolev and Gagliardo-Nirenberg inequalities on half spaces via mass transport and consequences}
\author{Van Hoang Nguyen\footnote{Institut de Math\'ematiques de Jussieu, Universit\'e Pierre et Marie Curie (Paris 6), 4 place Jussieu, 75252 Paris, France. Email: vanhoang@math.jussieu.fr, vanhoang0610@yahoo.com.}}
\begin{document}
\maketitle

%% Classification and key words; note that the 2010 classification is used:

\renewcommand{\thefootnote}{}

\footnote{Supported by the French ANR GeMeCoD, ANR 2011 BS01 007 01.}

\footnote{2010 \emph{Mathematics Subject Classification\text}: 26D15, 46E35.}

\footnote{\emph{Key words and phrases\text}: Weighted Sobolev inequalities, weighted Gagliardo-Nirenberg inequalities, weighted $L^p$-logarithmic Sobolev inequalities, sharp constants, Brenier map.}

\renewcommand{\thefootnote}{\arabic{footnote}}
\setcounter{footnote}{0}

\begin{abstract}
By adapting the mass transportation technique of Cordero-Erausquin, Nazaret and Villani, we obtain a family of sharp  Sobolev  and  Gagliardo-Nirenberg (GN) inequalities on the half space $\R^{n-1}\times\R_+$, $n\geq 1$ equipped  with the weight $\omega(x) = x_n^a$, $a\geq 0$. It amounts to work with the fractional dimension $n_a = n+a$. The extremal functions in the weighted Sobolev inequalities are fully characterized. Using a dimension reduction argument and  the weighted Sobolev inequalities, we can reproduce a subfamily of the sharp GN inequalities on the Euclidean space due to Del Pino and Dolbeault, and  obtain some new sharp  GN inequalities as well.  Our weighted inequalities are also extended to the domain $\R^{n-m}\times \R^m_+$ and the weights are $\omega(x,t) = t_1^{a_1}\dots t_m^{a_m}$, where $n\geq m$, $m\geq 0$ and $a_1,\cdots,a_m\geq 0$. A weighted $L^p$-logarithmic Sobolev inequality is derived from these inequalities.  
\end{abstract}

\section{Introduction}
In the book \cite{BGL}, Bakry, Gentil and Ledoux prove that for any $n\in \N^\ast$ and $a \geq 0$ such that $n+a > 2$, there exists a constant $S(n,a)$ such that for any smooth, compactly supported function $f$ on $\R^{n-1}\times \R_+\subset \R^n$, we have  
\begin{equation}\label{eq:BGLSobolevonhalfspace}
\left(\int_{\R^{n-1}}\int_0^\infty |f(x)|^{2^*}x_n^a\, dx\right)^{\frac1{2^*}}\leq S(n,a) \left(\int_{\R^{n-1}}\int_0^\infty |\nabla f(x)|^2 x_n^a\, dx\right)^{\frac12},
\end{equation}
with $ 2^* = \frac{2(n+a)}{n+a-2}$ . The best constant $S(n,a)$ is given by
$$S(n,a) =\left(\frac1{\pi (n+a)(n+a-2)}\right)^{\frac12}\left[\frac{2\, \pi^{\frac{1+a}{2}}}{\Gamma(\frac{1+a}2)}\, \frac{\Gamma(n+a)}{\Gamma(\frac{n+a}{2})}\right]^{\frac{1}{n+a}},$$
where $\Gamma$ is the usual \emph{Gamma function\text} defined by
$\Gamma(r) = \int_0^\infty t^{r-1}\,e^{-t} dt$ for $r>0$.

The Bakry, Gentil, Ledoux's proof of~\eqref{eq:BGLSobolevonhalfspace} is based on the \emph{Curvature-Dimension\text} condition. First, these authors use the stereographic projection to transport $\R^{n-1}\times \R_+$ equipped with the measure having density $x_n^a$ with respect to Lebesgue measure on $\R^{n-1}\times \R_+$ to a half sphere $\{x\in S^n\, \big|\, x_n > 0\}$ equipped with the measure having density $x_n^a$ with respect to the Riemannian measure of the sphere. The operator associated with this measure is $\Delta_{S^n} + a\, \nabla \left(\log v\right)$, where $v(x) =x_n$. This operator satisfies the \emph{Curvature-Dimension\text} condition $CD(n+a-1,n+a)$ on the half sphere $\{v > 0\}$. This condition implies a sharp Sobolev inequality on $\{ v > 0\}$ which is equivalent to~\eqref{eq:BGLSobolevonhalfspace} via the stereographic projection. 

By an argument of dimension reduction reproduced in~\cite{BGL} one can derive from the weighted inequality~\eqref{eq:BGLSobolevonhalfspace} in dimension $n+1$ the following sub-family  of the sharp GN inequalities on $\R^n$ due to Del Pino and Dolbeault \cite{DD1}: Given $p > 1$, then
\begin{equation}\label{eq:DelpinoDolbeault}
\left(\int_{\R^n}|f(x)|^{2p} dx\right)^{\frac1{2p}}\leq G(n,p) \left(\int_{\R^n}|\nabla f(x)|^2 dx\right)^{\frac\theta{2}} \left(\int_{\R^n}|f(x)|^{p+1}dx\right)^{\frac{1-\theta}{p+1}},
\end{equation}
where $\theta = \frac{n(p-1)}{p(n+2 - (n-2)p}$ is determined by scaling invariance. A version of~\eqref{eq:DelpinoDolbeault} with $0 < p<1$ can be found in \cite{DD1}. In \cite{DD2}, Del Pino and Dolbeault generalized these inequalities to $L_p-$norm of gradient, and obtained the following family of sharp GN inequalities: Given $1 < p < n$ and $\alpha \in (1,\frac{n}{n-p})$, then
\begin{equation}\label{eq:DDp}
\left(\int_{\R^n}|f|^{\alpha p}dx\right)^{\frac1{\alpha p}}\leq G(n,p,\alpha)\left(\int_{\R^n}|\nabla f|^p dx\right)^{\frac\theta p} \left(\int_{\R^n}|f|^{\alpha(p-1) +1} dx\right)^{\frac{1-\theta}{\alpha(p-1)+1}},
\end{equation}
$\theta$ is determined by scaling invariance. The proof of Del Pino and Dolbeault relies on standard techniques such as symmetrization, solution of one dimensional variational problem. A new and simple proof of~\eqref{eq:DDp} which uses the mass transportation method is given in \cite{CNV} by Cordero-Erausquin, Nazaret and Villani. This method does not require symmetrization and can be applied to any norm of gradients on $\R^n$ (Del Pino and Dolbeault's results are stated for the Euclidean norm only).

Our first motivation in the present work was to generalize the inequality~\eqref{eq:BGLSobolevonhalfspace} to $L_p$ norm of gradients. This required to seek for different approach, as the proof in~\cite{BGL} relied on the conformal structure of the $L^2$ inequality. 
We will see that a new, simple,  twist in the mass transportation technique can be used for our purposes. 
But before going on, we should emphasize first that the extension to $L^p$ norm of gradients of~\eqref{eq:BGLSobolevonhalfspace} has already been obtained in the very recent works of  Cabr\'e, Ros-Oton and Serra~\cite{CR, CRS}. We learned of these results while writing our paper, but actually  their approach is completely different (see the discussion below). The mass transport proof that we propose in the present work not only provides a simple approach to the weighted Sobolev inequalities in question, but it will also allow us to obtain some other, more general, inequalities. Nonetheless, for clarity, we choose to discuss the weighted Sobolev inequalities first, as it  shows clearly how the argument works.

We shall also mention that in his book (note page $562$ of \cite{Villani1}) Villani suggested that the method used to derive Sobolev type inequalities from the curvature-dimension criterion (Theorems $21.9$ and $21.12$) should also apply to the case of convex cones. Our proof below is more direct.

Let $||\cdot||$ be a norm on $\R^n$ and $B = \{x\in \R^n \,:\, ||x||\leq 1\}$ the associated unit ball. Its dual norm $||\cdot||_*$ is defined on $\R^n$ by
$$||x||_* = \sup_{||y||\leq 1}\, x\cdot y,$$
where $x\cdot y$ denotes the Euclidean scalar product of $x$ and $y$. Throughout the paper (unless otherwise stated), we denote 
$$\Om = \R^{n-1}\times \R_+$$
 the open half-space $\{x_n>0\}$ of $\R^n$, and $\om$ will stand for the positive weight 
$$\omega(x) =x_n^a,\quad\quad\forall\, x\in \Om.$$
Then $L^p(\Om,\om)$ will be  the space of all measurable functions $f$ such that
$$||f||_{L^p(\Om,\om)} =\left(\int_\Om |f(x)|^p\om(x)\, dx \right)^{\frac1p}<\infty.$$
For $p\geq 1$, we define $\dot{W}^{1,p}(\Om,\om)$ the space of all measurable functions $f$ such that its level sets $\{x\in\Om\, : \, |f(x)| >t\}, t > 0$, have finite measure with respect to measure of density $\om$ in $\Om$, and its distributional gradient belongs to $L^p(\Om,\om)$. For $f\in \dot{W}^{1,p}(\Om,\om)$, we denote
$$||\nabla f||_{L^p(\Omega,\omega)} = \left(\int_{\Omega} ||\nabla f(x)||_*^p\, \omega(x) dx\right)^{\frac1p}.$$

We next introduce a family of the functions $h_{p,a}$ in $\dot{W}^{1,p}(\Om,\om)$ with $1\leq p < n_a:=n+a$ by
\begin{equation}\label{eq:Sobextremalfunction}
\forall x\in \Omega, \qquad h_{p,a}(x) = 
\begin{cases}
\displaystyle (\sigma_{p,a} +||x||^q)^{-\frac{n_a-p}{p}}&\mbox{if $p > 1$  }\\ 
\frac{\ind_{B\cap \Om}(x)}{(\int_{B\cap \Om}x_n^a \,dx)^{\frac{n_a-1}{n_a}}}&\mbox{if $p =1$,}
\end{cases}
\end{equation}
where $q = p/(p-1)$, $B = \{x\in \R^n\, :\, ||x||\leq 1\}$ and 
$$\sigma_{p,a} =\left[\frac{\Gamma(\frac{n_a}{p})\Gamma(\frac{n_a}{q}+1)}{\Gamma(n_a)}\int_{B\cap\Om} x_n^a dx\right]^{\frac{p}{n_a}}$$ 
is normalized constant such that 
$$ \int_\Omega h_{p,a}(x)^{p^*}\omega(x)dx = 1,\quad p^* = \frac{n_a p}{n_a -p}.$$
We will see that these functions are extremal functions for the weighted Sobolev inequality in the next two propositions.
\begin{proposition}\label{maintheorem1}
Let $a\geq 0$. If $1 < p <n_a$, there exists a constant $S(n,a,p) $ such that for any  $f\in \dot{W}^{1,p}(\Omega,\omega)$, 
\begin{equation}\label{eq:swSobp}
||f||_{L^{p^*}(\Omega,\omega)}\leq S(n,a,p)\,||\nabla f||_{L^p(\Omega,\omega)}.
\end{equation}
The best constant $S(n,a,p)$ is given by
\begin{equation}\label{eq:bestconstant}
S(n,a,p) = \left(\frac{(p-1)^{p-1}}{n_a(n_a-p)^{p-1}}\right)^{\frac1{p}}\left[\frac{\Gamma(\frac{n_a}p)\Gamma(\frac{n_a(p-1)}{p}+1)}{\Gamma(n_a)} \int_{B\cap \Om} x_n^a dx\right]^{-\frac1{n_a}}
\end{equation}
Equality in~\eqref{eq:swSobp} holds if and only if
$$ f(x) = c\, h_{p,a}(\lambda(\x-\x_0,x_n)),$$
for some $c\in \R$, $\lambda >0$ and $\x_0\in \R^{n-1}$, and $h_{p,a}$ is given by~\eqref{eq:Sobextremalfunction}.
\end{proposition}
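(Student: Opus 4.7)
The plan is to adapt the Cordero-Erausquin--Nazaret--Villani mass-transport strategy of~\cite{CNV} to the weighted setting, treating $\om(x)\,dx$ as the reference measure on $\Om$ with ``effective dimension'' $n_a = n+a$. By standard density arguments, it suffices to establish~\eqref{eq:swSobp} for nonnegative $f \in C_c^\infty(\overline\Om)$ with $\int_\Om f^{p^*}\om\,dx = 1$. Take $g := h_{p,a}$ as the putative extremizer (it is already so normalized by the choice of $\sigma_{p,a}$), and invoke Brenier's theorem for the two probability measures $f^{p^*}\om\,dx$ and $g^{p^*}\om\,dy$: there exists a convex function $\vphi$ whose gradient $T := \na\vphi$ pushes the first onto the second. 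Since the target is supported in $\overline\Om$, one has $T_n(x) = \pa_n\vphi(x) \ge 0$ a.e., and the weighted Monge-Amp\`ere equation (in the Alexandrov sense) reads
\[
f(x)^{p^*}\,x_n^a \;=\; g(T(x))^{p^*}\,T_n(x)^a\,\det D^2\vphi(x)\qquad \text{a.e.\ in }\Om.
\]

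The one new ingredient compared to~\cite{CNV} is the following weighted AM-GM inequality, applied pointwise to the nonnegative eigenvalues $\lam_1,\dots,\lam_n$ of $D^2\vphi$ together with the nonnegative quantity $T_n/x_n$ counted with weight $a$:
\[
\bigl[\det D^2\vphi \cdot (T_n/x_n)^a\bigr]^{1/n_a} \;\le\; \frac{1}{n_a}\Bigl(\De\vphi + a\,\frac{\pa_n\vphi}{x_n}\Bigr) \;=:\; \frac{L\vphi}{n_a},
\]
where $L$ is the weighted Laplacian self-adjoint for $\om\,dx$. Setting $\be := p^*(n_a-1)/n_a = p(n_a-1)/(n_a-p)$, I change variables via the Monge-Amp\`ere equation and apply the above bound to get
\[
\int_\Om g^\be\,\om\,dy \;=\; \int_\Om f^\be\,\bigl[\det D^2\vphi\cdot (T_n/x_n)^a\bigr]^{1/n_a}\om\,dx \;\le\; \frac{1}{n_a}\int_\Om f^\be\,(L\vphi)\,\om\,dx.
\]
Integrating by parts against $\om\,dx$ — boundary terms vanish when $a > 0$ because $\om$ does, while for $a=0$ the residual contribution $-\int_{\pa\Om} f^\be\,T_n\,d\bar x$ has the favorable sign and may be discarded — yields $\int f^\be L\vphi\,\om\,dx \le -\be\int f^{\be-1}\na f\cdot\na\vphi\,\om\,dx$. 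Then $\na f\cdot\na\vphi \le \|\na f\|_*\,\|\na\vphi\|$ followed by H\"older with exponents $p$ and $q=p/(p-1)$, together with the identity $(\be-1)q = p^*$, gives
\[
\int_\Om g^\be\,\om\,dy \;\le\; \frac{\be}{n_a}\,\|\na f\|_{L^p(\Om,\om)}\,\left(\int_\Om f^{p^*}\|\na\vphi\|^q\,\om\,dx\right)^{1/q},
\]
and the remaining integral transports to $\int_\Om g^{p^*}\|y\|^q\,\om\,dy$ by the push-forward identity $T_\#(f^{p^*}\om) = g^{p^*}\om$.

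The choice $g = h_{p,a}$ makes the integrals $\int g^\be \om$, $\int g^{p^*}\|y\|^q\om$, and $\int g^{p^*}\om = 1$ all explicitly computable: after polar coordinates adapted to the norm $\|\cdot\|$ and to the weight $x_n^a$, each reduces to a product of one-dimensional Gamma integrals times the factor $\int_{B\cap\Om}x_n^a\,dx$. Substituting delivers precisely~\eqref{eq:swSobp} with $S(n,a,p)$ as in~\eqref{eq:bestconstant}. Equality is attained for $f = h_{p,a}$, in which case the Brenier map is $T = \id$, so $D^2\vphi = \id$ and $T_n/x_n = 1$, forcing equality in the weighted AM-GM; and one verifies directly that $\|\na h_{p,a}\|_*^p = c\,h_{p,a}^{p^*}\|x\|^q$ for an explicit constant, which is the equality case of H\"older. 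Invariance of~\eqref{eq:swSobp} under translations $\bar x \mapsto \bar x - \bar x_0$ and dilations $\la > 0$ recovers the full equality family described. The main technical obstacle is justifying the Monge-Amp\`ere identity and the integration by parts when $\vphi$ is only convex (not $C^2$): this is handled as in~\cite{CNV} by working with the distributional $\De\vphi$, whose singular part is a nonnegative measure and hence has the correct sign for the inequality, together with a careful approximation argument near $\pa\Om$ and at infinity.
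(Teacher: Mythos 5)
Your proposal follows essentially the same mass-transport strategy as the paper: the weighted Monge--Amp\`ere identity, the pointwise inequality
\[
\bigl[\det D^2\vphi\cdot(\pa_n\vphi/x_n)^a\bigr]^{1/n_a}\le\frac1{n_a}\Bigl(\De\vphi+a\,\frac{\pa_n\vphi}{x_n}\Bigr),
\]
which is exactly the $\gamma=1-\tfrac1{n_a}$ case of the paper's central inequality in Lemma~\ref{technical}, then integration by parts and H\"older. Your algebra checks out: with $\be=p^*(n_a-1)/n_a$ one has $\be-1=n_a(p-1)/(n_a-p)=p^*/q$, so $(\be-1)q=p^*$, and the push-forward returns $\int g^{p^*}\|y\|^q\om\,dy$; the sign of the boundary contribution (vanishing when $a>0$, favorable when $a=0$ since $\nabla\vphi\cdot\nu=-\pa_n\vphi\le0$ on $\pa\Om$) and the sign of the singular part of $D^2\vphi$ are correctly identified, matching the paper's more careful truncation argument.

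There is, however, a genuine gap in the equality analysis. The statement asserts that equality holds \emph{if and only if} $f(x)=c\,h_{p,a}(\lambda(\x-\x_0,x_n))$. Your proposal only proves the ``if'' direction: you verify that $f=h_{p,a}$ achieves equality (Brenier map is the identity, AM-GM and H\"older saturate) and then appeal to the translation/dilation invariance of~\eqref{eq:swSobp} to get the whole family. What is missing is the converse: given \emph{any} extremal $f$, one must show it must be of this form. That argument is nontrivial in the mass-transport framework because $\vphi$ is a priori only convex, so $D^2_{\mathcal{D}'}\vphi$ may have a singular part, and the intermediate integration-by-parts inequality~\eqref{eq:IbP2} must be shown to hold for the candidate extremal $f$ (not just smooth compactly supported approximations) before one can trace back through H\"older and the AM-GM steps. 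The paper's Proposition~\ref{equality} carries this out with a careful regularization near $\pa\Om$ and at infinity, then concludes that $D^2_{\mathcal{D}'}\vphi$ has no singular part and that, for $a>0$, equality in the AM-GM forces $D^2\vphi=\lambda\,\id_n$ \emph{and} $\pa_n\vphi/x_n=\lambda$, while for $a=0$ only $D^2\vphi=\lambda\,\id_n$ is available and an extra argument rules out translations in the $x_n$-direction. Without this, your proof establishes the sharp inequality and exhibits the extremizers but does not prove uniqueness of the extremizers.
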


In the case $p = 2$ with the Euclidean norm $|\cdot|$, Proposition~\ref{maintheorem1} reproduces the inequality~\eqref{eq:BGLSobolevonhalfspace} of Bakry, Gentil and Ledoux. By adapting the dimension-reduction mentioned above to the case where $p\neq 2$, we will show in Section~\S 3 (see Theorem~\ref{FromSobtoGNS}), how to reproduce part of the family~\eqref{eq:DDp} of sharp GN inequalities on the Euclidean space (with an arbitrary norm). This gives a geometric justification to the family~\eqref{eq:DDp}.

Let us  mention that, interestingly enough,  even if we aim at the family~\eqref{eq:DDp} for the Euclidean norm only, we~\emph{need} to use, when $p\neq 2$, the Proposition~\ref{maintheorem1} for a \emph{non-Euclidean} norm. This gives more evidence that the extension to arbitrary norms in Sobolev type inequalities is not a purely formal matter, but it is indeed natural and useful. Let us also point out that we can actually extend the family~\eqref{eq:DDp} to the case $p>n$. These sharp GN inequalities on $\R^n$ for $p>n$ and $n\ge 2$ seem to be new.

\medskip

The case when $p=1$ of the weighted Sobolev inequality above is related to the functions of $\om-$bounded variation. A function $f:\Om\to \R$ is said to have $\om-$bounded variation if
$$\sup\left\{\int_\Om f\d(g\om)\, dx\, :\, g\in C_0^1(\Om,\R^n),\, \, ||g(x)||\leq 1, \,\,\, \forall\, x\in \Om \right\} < \infty.$$
If we denote $\BV_\om(\Om)$ the set of such functions, then $\dot{W}^{1,1}(\Om,\om)\subset BV_\om(\Om)$. Note that if $f$ has $\om-$bounded variation, then it has locally bounded variation on $\Om$. Let $Df$ be the vector valued Radon measure on $\Om$ such that
$$\int_\Om f \d\,\varphi \,dx = -\int_\Om \varphi\cdot Df $$  
for all $\varphi \in C_0^1(\Om,\R^n)$, and let $|Df|$ be its total variation. It follows from \cite[Theorem $1$, Chapter $5$]{EG} that there exists a $|Df|-$measurable function $\nu_f: \Om\to \R^n$ such that $|\nu_f|=1$ $|Df|-$a.e., and $d(Df)(x) = \nu_f(x)\, d(|Df|)(x)$.

It is readily to check that
$$\sup\left\{\int_\Om f\d(g\om)\, dx\, \big|\, g\in C_0^1(\Om,\R^n),\,\, ||g(x)||\leq 1, \,\, \forall\, x\in \Om \right\} = \int_\Om \om\, ||\nu_f||_*\, d(|Df|),$$
and if $f\in \dot{W}^{1,1}(\Om,\om)$, we have
$$||\nabla f||_{L^1(\Om,\om)} = \int_\Om \om\, ||\nu_f||_*\, d(|Df|).$$

A measurable set $E\subset \R^n$ has $\om-$bounded variation if $\ind_E \in \BV_\om(\Om)$, and we define its weighted perimeter with respect to $\Om$ and $||\cdot||$, denote $\nu_E = \nu_{\ind_E}$, by
$$P_{\om,||\cdot||}(E,\Om) = \int_\Om \om\,||\nu_E||_*\, d(|D\ind_E|).$$
If $E$ is Lipschitz, we have
$$P_{\om,||\cdot||}(E,\Om) = \int_{\partial E\cap \Om} ||n(x)||_*\, \om(x)\, d\,\mathcal{H}^{n-1}(x),$$
where $n(x)$ is the ($\mathcal{H}^{n-1}$-almost everywhere defined) unit outward normal at $x\in \partial E$ and $\mathcal{H}^{n-1}$ is the $n-1$ dimensional Hausdorff measure on $\partial E$. The unit ball $B = \{x\in \R^n\, :\, ||x||\leq 1\}$ has $\omega-$bounded variation, and its weighted perimeter is 
$$P_{\om,||\cdot||}(B,\Om) = n_a\int_{\Om\cap B} \om(x)\, dx,$$
since for $\mathcal{H}^{n-1}$-almost every $x\in \partial B$ we have $n(x) =x^*:=\nabla(||\cdot||)(x)$. The corresponding weighted $L_1-$Sobolev inequality is stated in the following proposition.
\begin{proposition}\label{mainL1}
There exists a constant $S(n,a,1)$ such that for any smooth compactly supported function $f$, then 
\begin{equation}\label{eq:swSob1}
||f||_{L^{\frac{n_a}{n_a -1}}(\Omega,\omega)}\leq S(n,a,1)\, ||\nabla f||_{L^1(\Omega,\omega)}.
\end{equation}
The best constant $S(n,a,1)$ is given by
\begin{equation}\label{eq:bestconstant1}
S(n,a,1) = n_a^{-1}\left(\int_{B\cap \Om} x_n^a dx\right)^{-\frac1{n_a}}.
\end{equation}
Inequality~\eqref{eq:swSob1} extends to all functions of $\om-$bounded variation, and equality holds if 
$$ f(x) = c\, h_{1,a}(\lambda(\x-\x_0,t)),$$
for some $c\in \R$, $\lambda > 0$, $\x_0\in \R^{n-1}$ and $h_{1,a}$ is given by~\eqref{eq:Sobextremalfunction}.
\end{proposition}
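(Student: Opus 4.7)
My plan is to reduce Proposition~\ref{mainL1} to the sharp weighted isoperimetric inequality on $\Omega$:
$$|E|_\omega^{(n_a-1)/n_a} \leq S(n,a,1)\, P_{\omega,\|\cdot\|}(E,\Omega), \qquad E\subset\Omega,$$
where $|E|_\omega := \int_E \omega\,dx$, with equality on translates/dilates of $B\cap\Omega$. Granted this, the argument proceeds in the standard way. For $f\in \BV_\omega(\Omega)$ set $q = n_a/(n_a-1)$, $E_t = \{|f|>t\}$, $m(t) = |E_t|_\omega$. The weighted coarea formula
$$\|\nabla f\|_{L^1(\Omega,\omega)} = \int_\Omega \omega\, \|\nu_f\|_*\, d|Df| = \int_0^\infty P_{\omega,\|\cdot\|}(E_t,\Omega)\, dt,$$
combined with the layer-cake representation $|f| = \int_0^\infty \mathbf{1}_{E_t}\, dt$ and Minkowski's integral inequality (valid since $q\geq 1$),
$$\|f\|_{L^q(\Omega,\omega)} \leq \int_0^\infty \|\mathbf{1}_{E_t}\|_{L^q(\Omega,\omega)}\, dt = \int_0^\infty m(t)^{1/q}\, dt,$$
yields \eqref{eq:swSob1} after applying the isoperimetric estimate pointwise in $t$.

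To obtain the isoperimetric inequality with the sharp constant $S(n,a,1)$, I would pass to the limit $p\to 1^+$ in Proposition~\ref{maintheorem1}. A direct inspection of \eqref{eq:bestconstant} using $(p-1)^{p-1}\to 1$, $(n_a-p)^{p-1}\to 1$, $\Gamma(n_a/p)\to\Gamma(n_a)$ and $\Gamma(n_a(p-1)/p+1)\to 1$ shows that $S(n,a,p)\to S(n,a,1)$. Applying \eqref{eq:swSobp} to a smooth compactly supported approximation $f_\varepsilon$ of $\mathbf{1}_E$ and passing to the limit along a diagonal $(p,\varepsilon)\to (1^+,0)$ produces the isoperimetric inequality for an arbitrary $E$ of finite weighted perimeter. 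As an alternative more in the spirit of the paper, one can run the Brenier map argument used for Proposition~\ref{maintheorem1} directly at $p=1$: transport $\omega\mathbf{1}_E/|E|_\omega$ onto $\omega\mathbf{1}_{B\cap\Omega}/|B\cap\Omega|_\omega$ via $T=\nabla\varphi$, use the weighted Monge--Amp\`ere equation together with the pointwise AM--GM inequality $n_a(\det DT)^{1/n_a}\le \d T + a\,(\partial_{x_n}\varphi)/x_n$ that underlies the proof of Proposition~\ref{maintheorem1}, and estimate the divergence integral by the weighted perimeter via the very definition of $P_{\omega,\|\cdot\|}(\cdot,\Omega)$ applied to $\mathbf{1}_E$.

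Equality for $f = c\,h_{1,a}(\lambda(\bar{x}-\bar{x}_0,x_n))$ then follows because every superlevel set of such an $f$ is a translate/dilate of $B\cap\Omega$ and therefore saturates the isoperimetric inequality, while the layer-cake/Minkowski step becomes an equality; the explicit computation using $P_{\omega,\|\cdot\|}(B,\Omega)=n_a|B\cap\Omega|_\omega$, which is recorded just before the proposition, then recovers the formula~\eqref{eq:bestconstant1}. The main obstacle I anticipate is the sharp isoperimetric step itself: since $\omega(x)=x_n^a$ degenerates on $\partial\Omega$, one must verify carefully that the portion of $\partial E$ lying on $\{x_n=0\}$ contributes nothing to $P_{\omega,\|\cdot\|}(E,\Omega)$ (which is consistent with the BV definition) and that $\|\nabla f_\varepsilon\|_{L^p(\Omega,\omega)}$ converges to the correct weighted perimeter along the chosen diagonal subsequence; in the transport-based alternative, the analogous issue is controlling the boundary term on $\{x_n=0\}$ produced by integration by parts against the degenerate weight.
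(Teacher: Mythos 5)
Your proposal is correct but takes a longer and genuinely different route from the paper's. The paper's own proof (Proposition~\ref{p=1}) runs the mass-transport argument directly at $p=1$: it sets $F=f^{n_a/(n_a-1)}$, takes $G=h_{1,a}^{n_a/(n_a-1)}$ (both probability densities against $\om\,dx$), and applies Lemma~\ref{technical} at the boundary value $\gamma=1-\frac1{n_a}$, for which the coefficient $\frac{1-n_a(1-\gamma)}{1-\gamma}$ vanishes and $F^\gamma=f$; this yields $n_a\bigl(\int_{B\cap\Om}x_n^a\,dx\bigr)^{1/n_a}\le\int_\Om\nabla f\cdot(-\nabla\varphi)\,\om\,dx$, and since $\nabla\varphi$ takes values in $B$ the pointwise estimate $\nabla f\cdot(-\nabla\varphi)\le||\nabla f||_*$ finishes the proof for smooth compactly supported $f$; BV functions follow by approximation, and equality for $h_{1,a}$ is checked via $P_{\om,||\cdot||}(B,\Om)=n_a\int_{B\cap\Om}\om\,dx$. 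Your route instead goes Sobolev ($p>1$) $\Longrightarrow$ weighted isoperimetric $\Longrightarrow$ $L^1$-Sobolev via coarea and Minkowski's integral inequality. This is valid and yields the sharp constant, but it contains a redundancy: once you have checked that $S(n,a,p)\to S(n,a,1)$ as $p\to1^+$ (your computation is correct), applying~\eqref{eq:swSobp} to a \emph{fixed} smooth compactly supported $f$ and letting $p\to1^+$ already gives~\eqref{eq:swSob1} by dominated convergence (noting $p^*\to n_a/(n_a-1)$), so passing through indicator approximations and coarea is unnecessary. Your transport-at-the-set-level alternative is essentially the paper's argument specialized to indicator functions, and the boundary-term concern you raise there is exactly what Lemma~\ref{technical} is designed to handle: the cut-offs $\theta_k(x)=\theta(kx_n)$, the sign $\partial_n\varphi\ge0$ (a consequence of $\nabla\varphi$ mapping into $\Om$), and Fatou's lemma let one simply drop the degenerate boundary contribution. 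The one thing your route buys is that it records the weighted isoperimetric inequality~\eqref{eq:Isopinequality} on the way, whereas the paper deduces it afterwards from Proposition~\ref{mainL1} by approximation.
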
 

By an approximation argument, we see that inequality~\eqref{eq:swSob1} is equivalent to a weighted isoperimetric inequality on $\Omega$ as follows,
 \begin{equation}\label{eq:Isopinequality}
\frac{P_{\om,||\cdot||}(E,\Om)}{\left(\int_{E\cap\Om}\om\,dx\right)^{\frac{n_a-1}{n_a}}}\, \geq \,\frac{P_{\om,||\cdot||}(B,\Om)}{\left(\int_{B\cap\Om}\om\,dx\right)^{\frac{n_a-1}{n_a}}},
\end{equation}
that is, among the set $E$ of $\om-$bounded variation with $\int_{E\cap\Om}\om\, dx = \int_{B\cap\Om}\om\, dx$, then $B$ has smallest weighted perimeter. Hence $B$ solves the isoperimetric problem on $\Om$ with weight $\om$. 

In the recent papers~\cite{CR, CRS}, Cabr\'e~\emph{et al.} stated the weighted isoperimetric inequality~\eqref{eq:Isopinequality} with a proof which relies on the so-called $ABP$ method for an  appropriate linear Neumann problem involving the following Laplacian-type operator
$$Lu = \om^{-1}\d\left(\om \nabla u\right) = \Delta u + \frac{\nabla\om\cdot \nabla u}{\om}.$$
Their method provides a proof for  isoperimetric inequalities with monominal weights (see \cite{CR}), which can be generalized to inequalities on open convex cone with general weights (see \cite{CRS}), which includes the half-space case discussed here (Proposition \ref{mainL1}). Then, they show that a weighted radial rearrangement argument of Talenti \cite{Tal1} allows to pass from the weighted isoperimetric inequality to the sharp weighted Sobolev inequalities, as the one stated in Proposition~\ref{maintheorem1}.

Our mass transport approach also allows to treat the more general case of weighted GN inequalities on $\Om$. Before stating the next result, let us introduce the functions $h_{\alpha,p,a}$ for $1< p < n_a$ and for $\alpha\in (0,\frac{n_a}{n_a-p})$, $\alpha \not= 1$, by
\begin{equation}\label{eq:ExtremalfunctionGN}
 h_{\alpha,p,a}(x) = (\sigma_{\alpha,p,a} +(\alpha -1)\,||x||^q)_+^{\frac{1}{1-\alpha}}, 
\end{equation}
where $q = p/(p-1)$, and  
\begin{equation*}\label{eq:GNextremalfunction}
\sigma_{\alpha,p,a} = 
\begin{cases}
\left[(\alpha -1)^{-\frac{n_a}{q}}\frac{\Gamma(\frac{\alpha p}{\alpha -1} -\frac{n_a}{q})\Gamma(\frac{n_a}{q}+1)}{\Gamma(\frac{\alpha p}{\alpha -1})}\int_{B\cap\Om} x_n^a\, dx\right]^{\frac{q(\alpha -1)}{\alpha pq -n_a(\alpha -1)}}&\mbox{ if $\alpha >1$  }\\ 
\left[(1 -\alpha)^{-\frac{n_a}{q}}\frac{\Gamma(\frac{\alpha p}{1-\alpha}+1)\Gamma(\frac{n_a}{q}+1)}{\Gamma(\frac{\alpha p}{1-\alpha}+\frac{n_a}{q}+1)}\int_{B\cap \Om} x_n^a\, dx\right]^{\frac{q(\alpha-1)}{\alpha pq -n_a(\alpha-1)}}&\mbox{ if $\alpha < 1$ }.
\end{cases}
\end{equation*}
 is normalized constant such that 
$$ \int_\Omega h_{\alpha,p,a}(x)^{\alpha p}\,\omega(x)\,dx = 1.$$
Note that for $\alpha <1$, the function $h_{\alpha,p,a}$ has compact support on $\Omega$, while  for $\alpha > 1$, it is positive everywhere, decaying polynomially at infinity.

The weighted GN inequalities on $\Om$ read as follows
\begin{theorem}\label{maintheorem2}
Let $p\in (1,n_a)$ and $\alpha\in (0,\frac{n_a}{n_a -p}]$, $\alpha \not= 1$.
\item (i) If $\alpha >1$, there exists a constant $G_{n,a}(\alpha,p)$ such that for any $f\in \dot{W}^{1,p}(\Omega,\omega)$,
\begin{equation}\label{eq:swGNineq}
||f||_{L^{\alpha p}(\Omega,\omega)} \leq G_{n,a}(\alpha,p)||\nabla f||_{L^p(\Omega,\omega)}^\theta ||f||_{L^{\alpha(p-1)+1}(\Omega,\omega)}^{1-\theta},
\end{equation} 
where
\begin{equation}\label{eq:theta}
 \theta =\frac{n_a(\alpha-1)}{\alpha(n_ap -(\alpha p+1 -\alpha)(n_a -p))} =\frac{p^*(\alpha -1)}{\alpha p(p^* -\alpha p+\alpha -1)}, 
\end{equation}
the best constant $G_{n,a}(\alpha,p)$ takes the explicit form, denoting $y =\frac{\alpha(p-1) +1}{\alpha -1}$,
\begin{equation}\label{eq:explicitconstant}
G_{n,a}(\alpha,p) = \left[\frac{y(\alpha-1)^p}{q^{p-1}n_a}\right]^{\frac{\theta}p}\left[\frac{qy - n_a}{qy}\right]^{\frac{1}{\alpha p}}\left[\frac{\Gamma(y)}{\Gamma(y-\frac{n_a}{q})\Gamma(\frac{n_a}{q}+1)\int_B x_n^a\, dx}\right]^{\frac{\theta}{n_a}}.
\end{equation}
\item (ii) If $\alpha < 1$, there exists a constant $N_{n,a}(\alpha,p)$ such that for any $f\in \dot{W}^{1,p}(\Omega,\omega)$,
\begin{equation}\label{eq:swGNineq1}
||f||_{L^{\alpha (p -1) +1}(\Omega,\omega)}\leq N_{n,a}(\alpha,p)||\nabla f||_{L^p(\Omega,\omega)}^\theta ||f||_{L^{\alpha p}(\Omega,\omega)}^{1-\theta},
\end{equation}
where
\begin{equation}\label{eq:theta1}
\theta =\frac{n_a(1-\alpha)}{(\alpha p+1-\alpha)(n-\alpha(n-p))} = \frac{p^*(1-\alpha)}{(p^*-\alpha p)(\alpha p+ 1-\alpha)},
\end{equation}
the best constant $N_{n,a}(\alpha,p)$ takes the explicit form, denoting $z = \frac{\alpha p -\alpha +1}{1 -\alpha }$,
\begin{equation}\label{eq:explicitconstant1}
N_{n,a}(\alpha,p) = \left[\frac{z(1-\alpha)^p}{q^{p-1}n_a}\right]^{\frac{\theta}p}\left[\frac{qz}{qz+n_a}\right]^{\frac{1-\theta}{\alpha p}}\left[\frac{\Gamma(z+1+\frac{n_a}{q})}{\Gamma(z+1)\Gamma(\frac{n_a}{q}+1)\int_B x_n^a \, dx}\right]^{\frac{\theta}{n_a}}.
\end{equation}

Moreover, equality in~\eqref{eq:swGNineq} and~\eqref{eq:swGNineq1} holds if 
$$ f(x) = c\, h_{\alpha,p,a}(\lambda(\x-\x_0,x_n)),$$
for some $c\in \R$, $\lambda > 0$, $\x_0\in \R^{n-1}$ and $h_{\alpha,p,a}$ is given by~\eqref{eq:ExtremalfunctionGN}.
\end{theorem}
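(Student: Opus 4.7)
The plan is to adapt the mass transport argument of Cordero-Erausquin, Nazaret, and Villani \cite{CNV}, in the spirit of Proposition~\ref{maintheorem1}, to the weighted GN setting. I focus on case (i), $\alpha>1$; case (ii), $\alpha<1$, is analogous. By scaling and density, we may take $f$ smooth and compactly supported in $\Om$, normalized so that $\|f\|_{L^{\alpha p}(\Om,\om)} = \|h_{\alpha,p,a}\|_{L^{\alpha p}(\Om,\om)} = 1$. Set $F := f^{\alpha p}$ and $G := h_{\alpha,p,a}^{\alpha p}$, which are probability densities with respect to $\om(x)\,dx$. Brenier's theorem applied to $F\om\,dx$ and $G\om\,dy$ yields a convex potential $\vphi$ whose gradient $T = \na\vphi$ transports the first measure onto the second. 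One verifies that $T(\Om)\subseteq\overline{\Om}$ a.e., and the weighted Monge--Amp\`ere equation
$$ f^{\alpha p}(x)\,\om(x) = h^{\alpha p}(T(x))\,\om(T(x))\,\det\hess\vphi(x) $$
holds a.e.

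The proof then hinges on two ingredients. First, the pushforward applied to the test function $h^{1-\alpha}(y) = \sigma_{\alpha,p,a} + (\alpha-1)\|y\|^q$ gives the identity
$$ \int_\Om h^{\alpha(p-1)+1}(y)\,\om(y)\,dy \;=\; \sigma_{\alpha,p,a} + (\alpha-1)\int_\Om \|T(x)\|^q f^{\alpha p}(x)\,\om(x)\,dx. $$
Second, the \emph{weighted} arithmetic--geometric mean inequality
$$ \bigl[(T_n(x)/x_n)^a\,\det\hess\vphi(x)\bigr]^{1/n_a} \;\le\; \frac{a\,T_n(x)/x_n + \De\vphi(x)}{n_a}, $$
obtained by applying AM--GM with weights $a/n_a$ on $T_n/x_n$ and $1/n_a$ on each eigenvalue of $\hess\vphi$, is the device that captures the effective dimension $n_a = n+a$ of the weighted half-space. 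Combined with Monge--Amp\`ere it produces the pointwise bound
$$ n_a\,f^{\alpha p/n_a}(x) \;\le\; h^{\alpha p/n_a}(T(x))\,\bigl(a\,T_n(x)/x_n + \De\vphi(x)\bigr). $$

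To close the argument, I would multiply this pointwise inequality by $f^{\alpha(p-1)+1-\alpha p/n_a}(x)\,\om(x)$ and integrate over $\Om$; the $\De\vphi$ term is integrated by parts against $\om$. A direct computation shows that the contribution of $\na\om$ cancels exactly the explicit $a\,T_n/x_n$ term, and the boundary at $\{x_n=0\}$ is killed by the weight. Using the explicit radial form of $h_{\alpha,p,a}$, the surviving integral pairs $\na f$ with a multiple of $\|T\|^{q-1}$; H\"older's inequality with exponents $p$ and $q$ then delivers a bound of the shape
$$ n_a\int_\Om f^{\alpha(p-1)+1}\om\,dx \;\le\; C\,\|\na f\|_{L^p(\Om,\om)}\,\Bigl(\int_\Om \|T(x)\|^q f^{\alpha p}(x)\,\om(x)\,dx\Bigr)^{\!1/q}. $$
Substituting the first identity eliminates the unknown transport moment $\int_\Om \|T\|^q f^{\alpha p}\om\,dx$, and restoring homogeneity in $f$ produces~\eqref{eq:swGNineq} with the sharp constant~\eqref{eq:explicitconstant}. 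Equality requires $T = \id$ a.e., which forces $f$ to be a dilate-translate of $h_{\alpha,p,a}$ in the $\x$ variable. The main obstacle is precisely to make the weighted AM--GM step, the singular integration by parts near $\{x_n=0\}$, and the explicit structure of $h_{\alpha,p,a}$ mesh exactly, so that the extremal profile saturates every inequality and no loss is incurred in passing to the sharp constant.
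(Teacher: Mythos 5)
Your outline starts in the right spirit (Brenier map, weighted Monge--Amp\`ere, AM--GM, H\"older), and two of your ingredients are fine: the pushforward identity for the transport moment is correct, and the pointwise bound $n_a\,f^{\alpha p/n_a}(x)\le h^{\alpha p/n_a}(T(x))\,L_A\varphi(x)$ (with $L_A\varphi = a\,\partial_n\varphi/x_n + \Delta\varphi$) is the weighted Monge--Amp\`ere equation raised to power $1/n_a$ combined with AM--GM. But the step you call ``multiply by $f^{\alpha(p-1)+1-\alpha p/n_a}\omega$ and integrate by parts'' does not go through as stated, and this is a genuine gap, not a technicality. After multiplying, the quantity you want to integrate by parts is
\[
\int_\Omega h^{\alpha p/n_a}(T(x))\,f^{\alpha(p-1)+1-\alpha p/n_a}(x)\,L_A\varphi(x)\,\omega(x)\,dx,
\]
and the coefficient of $L_A\varphi$ still contains $h^{\alpha p/n_a}(T(x))$. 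Integrating by parts forces you to differentiate this factor, which by the chain rule produces $D^2\varphi(x)\,\nabla\bigl(h^{\alpha p/n_a}\bigr)(T(x))$. That object involves the Hessian of the Brenier potential (which is only an $L^1_{\mathrm{loc}}$ matrix measure), it has no usable sign when paired with $\nabla\varphi$, and it does not reduce to the $\|T\|^{q-1}$ you expect; your ``surviving integral pairs $\nabla f$ with a multiple of $\|T\|^{q-1}$'' silently discards this term. The $\nabla\omega$ vs.\ $a\,T_n/x_n$ cancellation you invoke is correct, but it handles only the weight; it does not remove the problematic $h(T)$ factor.

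The paper's Lemma~\ref{technical} is engineered precisely so that this never happens. Rather than raising the Monge--Amp\`ere identity to power $1/n_a$ and then multiplying by a power of $f$, the paper raises it to the $\gamma-1$ power with $\gamma = p_\alpha/(\alpha p)=\bigl(\alpha(p-1)+1\bigr)/(\alpha p)$, and applies the \emph{three-term} scalar inequality~\eqref{eq:them}, which carries an extra constant term $\tfrac{1-n_a(1-\gamma)}{1-\gamma}$. Multiplying through by $F\omega$ then makes the $G(T)^{\gamma-1}F\omega$ term vanish into $\int G^\gamma\omega$ by pushforward, and what is left to integrate by parts is $\int F^\gamma\,L_A\varphi\,\omega$, whose coefficient $F^\gamma=f^{\alpha(p-1)+1}$ depends on $x$ alone (not on $T(x)$), so the integration by parts is safe. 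The rest of Proposition~\ref{GNineq} then proceeds via Young's inequality with a free parameter $\mu$, which gives the non-homogeneous inequality~\eqref{eq:nonhomogeneousGNineq} and, as a bonus, the duality principle; your substitution of H\"older plus the transport-moment identity would recover the homogeneous GN inequality but not the duality. The main thing to fix in your argument is to move the AM--GM step to the exponent $\gamma-1$ so that all $T$-dependence in the ``big integral'' is eliminated by pushforward before you attempt any integration by parts.
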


More generally, our mass transportation proof can be used to generalize Propositions~\ref{maintheorem1} and~\ref{mainL1}, and Theorem~\ref{maintheorem2} to the domain
$$\Sigma =\R^{n-m}\times \R^{m}_+ = \{y\in \R^n\, : \,  y_j>0 \textrm{ for } j=n-m+1, \ldots, n\},$$
where $n \geq m\geq 0$, with the monominal weights
\begin{equation}\label{eq:generalweight}
\sigma(x,t) = t_1^{a_1}\cdots t_m^{a_m},
\end{equation}
where $ a_1,\cdots, a_m\geq 0$ and $(x,t)$ denotes vector $(x_1,\cdots,x_{n-m}, t_1,\cdots, t_m)\in \R^{n-m}\times \R^m_+$. The corresponding fractional dimension will then be
$$n_a = n+ a_1+\cdots+ a_m.$$
This generalization is given in section \S4. As before, the result for the particular case of the weighted Sobolev inequalities is already contained in work of Cabr\'e and Ros-Oton~\cite{CR}, with the very different proof mentioned above.

We know that, on $\R^n$, the sharp $L_p-$logarithmic Sobolev inequalities are the limit case of the GN inequalities~\eqref{eq:DDp} when $1\leq p <n$ and $\alpha\to 1$ (see \cite{DD1,DD2}). Following the idea of these papers, we can deduce from Theorem \ref{maintheorem2} a family of sharp weighted $L_p-$logarithmic Sobolev inequalities on $\Omega$ with weighted $\omega$, by letting $\alpha \to 1$ when $1\leq p < n_a$. By using an idea of Beckner and Pearson \cite{BP} and the general form of Proposition~\ref{maintheorem1} on $\R^{n-m}\times \R^m_+$, we can extend the family of sharp weighted $L_p-$logarithmic Sobolev inequalities to all $p\geq 1$. Hence we get:
\begin{proposition}\label{LplogSobineq}
Let $p\geq 1$, then for any $f\in W^{1,p}(\Omega,\omega)$ such that $\int_\Omega |f|^p\,\omega \,dx = 1$, then 
\begin{equation}\label{eq:LplogSob}
\int_\Omega |f(x)|^p\ln(|f(x)|^p)\,\omega(x) dx \leq \frac{n_a}{p}\ln\left[\mathcal{L}_{n,a}(p)\int_\Omega ||\nabla f||_*^p\,\omega(x) dx\right],
\end{equation}
where
\begin{equation}\label{eq:LplogSobconstant}
\mathcal{L}_{n,a}(p) = 
\begin{cases}
\frac{p}{n_a}\left(\frac{p-1}{e}\right)^{p-1}\left[\Gamma(\frac{n_a}{q}+1)\int_B x_n^a\, dx\right]^{-\frac{p}{n_a}}&\mbox{if $p> 1$}\\
\frac{1}{n_a}\left[\int_B x_n^a \, dx\right]^{-\frac{1}{n_a}}&\mbox{if $p =1$}.
\end{cases}
\end{equation}
Equality in~\eqref{eq:LplogSob} holds if 
$$f(x) = b e^{-a||x -(\x_0,0)||^q},$$
for some $a > 0$, $\x_0\in \R^{n-1}$ and $|b|^{-p} = \int_{\Om}e^{-pa||x||^q}\, \omega(x) dx$ when $p > 1$, and
$$f(x) = b \ind_B(a(\x-\x_0,x_n)), $$
for some $a > 0$ and $\x_0\in \R^{n-1}$ and $|b| = a^{\frac{n_a}p}\left(\int_B x_n^a\, dx\right)^{-\frac1p}$ when $p=1$.
\end{proposition}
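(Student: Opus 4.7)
The plan has two parts. For $1\le p<n_a$ I would derive \eqref{eq:LplogSob} as the $\alpha\to 1^+$ limit of the weighted GN inequalities from Theorem \ref{maintheorem2}, with $p=1$ handled instead by a direct Jensen-type argument from Proposition \ref{mainL1}. For $p\ge n_a$ the GN route is unavailable, so I would turn to a Beckner--Pearson dimension-augmentation argument based on the generalization of Proposition \ref{maintheorem1} to $\R^{n-m}\times\R^m_+$ that is established in Section \S 4.

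For $1<p<n_a$, normalize $||f||_{L^p(\Omega,\omega)}=1$ and apply \eqref{eq:swGNineq} with $\alpha>1$ close to $1$, so that $\theta\to 0$ and both $||f||_{L^{\alpha p}(\Omega,\omega)}$ and $||f||_{L^{\alpha(p-1)+1}(\Omega,\omega)}$ tend to $1$. Setting $\Phi(\alpha)=\log\int_\Omega|f|^{\alpha p}\,\omega\,dx$, one has $\Phi(1)=0$ and $\Phi'(1)=\int_\Omega|f|^p\log(|f|^p)\,\omega\,dx$. Taking logarithms of \eqref{eq:swGNineq}, dividing by $\alpha-1$ and letting $\alpha\to 1^+$, the linear-in-$(\alpha-1)$ terms combine three contributions: from $\theta/(\alpha-1)\to n_a/p^2$ (cf.\ \eqref{eq:theta}) multiplied by $p\log||\nabla f||_{L^p(\Omega,\omega)}$, from the first-order Taylor expansion of $\log||f||_{L^{\alpha(p-1)+1}(\Omega,\omega)}$ at $\alpha=1$ (which produces $(p-1)/p$ times $\Phi'(1)$), and from the explicit asymptotics of $G_{n,a}(\alpha,p)$ given by \eqref{eq:explicitconstant}. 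Rearranging yields \eqref{eq:LplogSob} for the normalized $f$, and by scale invariance for all $f$. For $p=1$, Jensen's inequality applied to $\log$ (concave) with respect to the probability measure $|f|\,\omega\,dx$ and variable $|f|^{1/(n_a-1)}$ gives $\log\int_\Omega|f|^{n_a/(n_a-1)}\omega\,dx\ge (n_a-1)^{-1}\int_\Omega|f|\log|f|\,\omega\,dx$; combined with the $L^1$-Sobolev inequality \eqref{eq:swSob1} this produces \eqref{eq:LplogSob} with $\mathcal{L}_{n,a}(1)=S(n,a,1)$. In both cases, the extremals $h_{\alpha,p,a}$ (respectively $h_{1,a}$) converge after rescaling to the claimed Gaussian profile $be^{-a||x-(\x_0,0)||^q}$ (respectively indicator), giving the equality cases.

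For $p\ge n_a$ I would work on the augmented domain $\Sigma=\R^{n-1}\times\R_+\times\R^k_+$ equipped with monominal weight $\sigma(x,t)=x_n^a\,t_1^{b_1}\cdots t_k^{b_k}$, choosing $k\ge 1$ and $b_j\ge 0$ so that the fractional dimension $N=n_a+k+b_1+\cdots+b_k$ strictly exceeds $p$. Apply the generalized Proposition \ref{maintheorem1} on $\Sigma$ to a tensor product $F(x,t)=f(x)\,g(t)$: Fubini splits both sides into products of $x$- and $t$-integrals, and one then optimizes over the auxiliary profile $g$ (the optimum being a Gaussian-type profile whose scale parameter is chosen to balance the two sides) and sends $k\to\infty$ (or some $b_j\to\infty$). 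The $t$-integrals concentrate and collapse in the limit, leaving precisely \eqref{eq:LplogSob} on $\Omega$.

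The main technical obstacle throughout is the bookkeeping of constants. In the first part, divergent contributions of the form $\log(\alpha-1)$ arising from $[y(\alpha-1)^p/(q^{p-1}n_a)]^{\theta/p}$ and from $[\Gamma(y)/\Gamma(y-n_a/q)]^{\theta/n_a}$ (using $y\sim p/(\alpha-1)$ together with the identity $p/q=p-1$) must cancel exactly, leaving a finite remainder that combines the surviving Gamma factors with $\Gamma(n_a/q+1)\int_B x_n^a\,dx$ into the characteristic prefactor $(p/n_a)((p-1)/e)^{p-1}$ of $\mathcal{L}_{n,a}(p)$. In the second part, Stirling-type asymptotics for the Gamma factors in \eqref{eq:bestconstant} must produce the same constant in the $k\to\infty$ limit, which is the standard ``miracle'' underlying the Beckner--Pearson argument.
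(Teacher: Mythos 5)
Your proposal is substantively correct in outline, but it takes a genuinely different route from the paper, and a more fragmented one. The paper gives a single unified Beckner--Pearson argument covering all $p\ge 1$: it applies the weighted Sobolev inequality (Proposition \ref{maintheorem1'}) on the $k$-fold product $\Omega_k=\Omega^k$ with product weight $\omega_k = t_1^a\cdots t_k^a$ and the $\ell^q$-combined norm, taking the test function to be the $k$-fold tensor power $f_k = f\otimes\cdots\otimes f$. Because of this self-tensorization, Fubini gives $\|f_k\|_{L^{p^*_k}}^{p^*_k} = (\int|f|^{p^*_k}\omega)^k$ and, after normalizing $\int|f|^p\omega=1$, $\|\nabla f_k\|_{L^p} = k^{1/p}\|\nabla f\|_{L^p}$ exactly; there is no auxiliary profile to optimize. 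As $k\to\infty$, $p^*_k = kn_ap/(kn_a-p)\to p$, and the factor $(kn_a-p)/(n_ap)$ multiplying $\log\int|f|^{p^*_k}\omega$ converges, via a first-order Taylor expansion of $r\mapsto\int|f|^r\omega$ at $r=p$, directly to $\frac{1}{n_a}\int|f|^p\log|f|^p\,\omega$; the constant $\mathcal{L}_{n,a}(p)$ falls out of Stirling's formula applied to the ratio of Gamma factors in $S(nk,k,a,p)$, with the formula $\int_{B_k\cap\Omega_k}\omega_k=\frac{q}{kn_a}(\frac{n_a}{q}\int_{B\cap\Omega}\omega)^k\,\Gamma(n_a/q)^k/\Gamma(kn_a/q)$ doing the bookkeeping. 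Your route splits into three cases: a GN-limit $\alpha\to 1^+$ for $1<p<n_a$ (a valid route, mentioned in the paper's introduction in reference to Del Pino--Dolbeault, though the divergent $\log(\alpha-1)$ cancellation you invoke is asserted rather than carried out), a Jensen argument for $p=1$ from Proposition \ref{mainL1} (correct; one checks $\mathcal{L}_{n,a}(1)=S(n,a,1)$), and an auxiliary-profile Beckner--Pearson for $p\ge n_a$. The last case is where your proposal is weakest: tensoring $F=f\cdot g$ with an auxiliary $g$ to be optimized means the gradient term on the right splits into a \emph{sum} $\|\nabla f\|_p^p\|g\|_p^p + \|f\|_p^p\|\nabla g\|_p^p$, so one must choose a $k$-dependent scale for $g$ and verify that the correct sharp constant survives the double limit. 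This works in principle but is substantially more delicate than the paper's self-tensor device, which forces the choice and makes the asymptotics mechanical. If you adopt the paper's self-tensorization, your three-case split becomes unnecessary.
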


On $\R^n$, sharp $L_p-$logarithmic Sobolev inequalities for $1\leq p < n$ were proved first by Del Pino and Dolbeault \cite{DD1, DD2} by considering the above mentioned limit of the sharp GN inequalities. But their results concerned only the Euclidean norm. Next, Gentil \cite{Gen} extended the result of Del Pino and Dolbeault for all $p\geq 1$ and for any norm on $\R^n$ by using the Pr\'{e}kopa-Leindler inequality and a special Hamilton-Jacobi equation. The case $p=2$ is interesting since it is equivalent to the Gross's logarithmic Sobolev inequality for the Gaussian measure. The case $p=1$ was proved by Beckner \cite{Bec}. Another proof of the sharp $L_p-$logarithmic Sobolev inequality for all $p\geq 1$ is given in \cite{AGK} and in \cite{Cor} where the authors exploit the mass transportation method. It is clear that this method gives us, using the techniques of the present paper, a direct proof of the inequality~\eqref{eq:LplogSob}, but we think it is also interesting to see them as a consequence of the sharp weighted Sobolev inequalities on the set $\Sigma$ with the weight $\sigma$. The proof of the Proposition~\ref{LplogSobineq} is given in section \S4.

Our method to prove the Propositions~\ref{maintheorem1} and ~\ref{mainL1}, and the Theorem~\ref{maintheorem2} is inspired by the work of Cordero-Erausquin, Nazaret and Villani \cite{CNV} which is based on the mass transportation method. We only use the so-called \emph{Brenier map\text}, the arithmetic-geometric inequality and H\"older's inequality (or Young's inequality). In recent years, mass transportation method has been used succesfully to prove some sharp inequalities in Analysis and Geometry, for example, see \cite{AGK,Bar,Cor,CNV,Gar,MV,MV2,Naz,Villani}.  Let us recall the results of Brenier and McCann.

Let $\mu$ and $\nu$ be two Borel probability measures on $\R^n$ such that $\mu$ is absolutely continuous with respect to Lebesgue measure. Then there exists a convex function $\varphi$ such that
\begin{equation}\label{eq:push-forward}
 \int b(y)d\nu(y) = \int b(\nabla\varphi(x))d\mu(x) .
\end{equation}
for every bounded or positive, Borel measurable function $b:\R^n\to \R$ (see \cite{Bre, McCann}). Furthermore, $\overline{\nabla\varphi(\supp \mu)} = \supp \nu$ and $\nabla \varphi$ is uniquely determined $d\mu-$almost everywhere. We call $\nabla\varphi$ the \emph{Brenier map} which transports $\mu$ to $\nu$. See \cite{Villani} for a review and dicussion of existing proofs of this map. Since $\varphi$ is convex, it is differentiable almost everywhere on its domain $\{ \varphi < \infty\}$; in particular, it is differentiable $d\mu-$almost everywhere. 

Let $\mu$ and $\nu$ be absolutely continuous with respect to the Lebesgue measure, with densities $F$ and $G$ respectively and let $\nabla\varphi$ be the Brenier map transporting $\mu$ onto $\nu$. Then we have
\begin{equation}\label{eq:push-forward1}
\int b(y)G(y)dy = \int b(\nabla\varphi(x))F(x)dx
\end{equation}
for every bounded or positive, Borel measurable function $b:\R^n\to \R$. If $\varphi$ is of class $C^2$, the change of variables $y =\nabla\varphi(x)$ in~\eqref{eq:push-forward1} shows that $\varphi$ solves the \emph{Monge-Amp\`{e}re equation \text}
\begin{equation}\label{eq:MAequation}
F(x) = G(\nabla\varphi(x))\det{D^2\varphi(x)}.
\end{equation}
Here $D^2\varphi(x)$ stands for the Hessian matrix of $\varphi$ at point $x$. Unfortunately, $\varphi$ is not $C^2$ in general. However,~\eqref{eq:MAequation} always holds in the $F(x)dx-$almost everywhere sense (see \cite{McCann1}) and $D^2\varphi$ should be understood in Aleksandrov sense, i.e, as the absolutely continuous part of the distributional Hessian of the convex function $\varphi$; we denote it by $D^2_A\varphi$. Moreover, this equation holds without further assumption on $F$ and $G$ beyond integrability. Then 
\begin{equation}\label{eq:MAequation1}
F(x) = G(\nabla\varphi(x))\det{D^2_A\varphi(x)}
\end{equation}
$F(x)dx-$almost everywhere.

As mentioned above, our proofs require to use H\"older's and Young's inequality. We recall them below. Let $||\cdot||$ be a norm on $\R^n$, $B$ its unit ball and $||\cdot||_*$ its dual norm. Then for any $\lambda >0$, Young's inequality holds
\begin{equation}\label{eq:Youngineq}
X\cdot Y \leq \frac{\lambda^{-p}}p ||X||_*^p + \frac{\lambda^q}q ||Y||^q, \quad\, q = \frac{p}{p-1}.
\end{equation}
For $X:\R^n\to (\R^n,||\cdot||_*)$ in $L^p$ and $Y: \R^n\to (\R^n,||\cdot||)$ in $L^q$, integration of~\eqref{eq:Youngineq} and optimization in $\lambda$ gives H\"older's inequality in the form
\begin{equation}\label{eq:Holderineq}
\int_{\R^n} X(x) \cdot Y(x)\, dx \leq \left(\int_{\R^n}||X(x)||_*^p \, dx\right)^{\frac1p}\, \left(\int_{\R^n} ||Y(x)||^q\, dy\right)^{\frac1q}.
\end{equation}

Since $||\cdot||$ is a Lipschitz function on $\R^n$, it is differentiable almost everywhere. When $0\not=x\in \R^n$ is a point of differentiability, the gradient of the norm at $x$ is the unique vector $x^* = \nabla(||\cdot||)(x)$ such that
\begin{equation}\label{eq:gradientofnorm}
||x^*||_* = 1, \quad\quad x\cdot x^* = ||x|| = \sup_{||y||_* = 1}x\cdot y.
\end{equation}
These equalities will be used to verify the extremality of $h_{p,a}$ and $h_{\alpha,p,a}$.

The rest of this paper is organized as follows: Section \S 2 is devoted to the proof of Proposotions~\ref{maintheorem1} and~\ref{mainL1}, and Theorem~\ref{maintheorem2}; actually we will prove a bit more, namely a duality principle, as in~\cite{CNV}. In the section \S 3, we show how to get (known and new) sharp GN inequalities on the Euclidean space from Proposition~\ref{maintheorem1}. The last section \S 4 is devoted to the extension of the results  to the domain $\Sigma$ with the monomial weight $\sigma$ given by~\eqref{eq:generalweight} and to the proof of Proposition~\ref{LplogSobineq} above.

\section{Proof of Propostions~\ref{maintheorem1} and~\ref{mainL1}, and Theorem \ref{maintheorem2}}

In this section, we give a mass transportation proof of Propositions~\ref{maintheorem1} and~\ref{mainL1}, and Theorem~\ref{maintheorem2}. We will need the following central lemma,
\begin{lemma}\label{technical}
Let $a\geq 0, n_a = n+a$ and let $1\not=\gamma \geq 1 -\frac1{n_a}$. Let $F$ and $G$ be two nonegative functions on $\Om = \R^{n-1}\times \R_+ \subset \R^n$ with $\int_\Omega F\,\omega\, dx =\int_\Omega G\, \omega\, dx =1$, and such that $F^\gamma$ is $C^1$ on $\Om$ and $F,G$ are compactly supported on $\overline{\Om}$. Then if  $\nabla\varphi$ is the Brenier map pushing $F\,\omega\, dx$ forward to $G\,\omega\, dx$, we have 
\begin{equation}\label{eq:integraionbypart}
\frac{1}{1-\gamma}\int_\Om G^\gamma \, \omega \, dx \leq \frac{1-n_a(1-\gamma)}{1-\gamma}\int_\Om F^\gamma\, \om\, dx -\int_\Om \nabla F^\gamma\cdot \nabla\varphi\, \om\, dx.
\end{equation}
\end{lemma}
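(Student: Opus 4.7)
First, let $\nabla\varphi$ denote the Brenier map pushing $F\omega\,dx$ forward to $G\omega\,dx$; since $G$ is supported in $\overline\Omega$, $\nabla\varphi$ takes values in $\overline\Omega$, so $\partial_n\varphi\ge 0$ almost everywhere. The weighted Monge--Amp\`ere equation~\eqref{eq:MAequation1} reads, $F\omega\,dx$-a.e.\ on $\Omega$,
$$F(x)\,x_n^a \;=\; G(\nabla\varphi(x))\,(\partial_n\varphi(x))^a\,\det D^2_A\varphi(x).$$
Applying~\eqref{eq:push-forward1} with $b=G^{\gamma-1}$ and eliminating $G(\nabla\varphi)$ via this identity, I would obtain the key change-of-variables formula
$$\int_\Omega G^\gamma\,\omega\,dy \;=\; \int_\Omega F^\gamma(x)\,\omega(x)\,J(x)^{1-\gamma}\,dx,\qquad J(x):=\frac{(\partial_n\varphi(x))^a\,\det D^2_A\varphi(x)}{x_n^a}.$$

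Next, I would combine two pointwise estimates on $J$. Denote by $\lambda_1,\dots,\lambda_n\ge 0$ the eigenvalues of $D^2_A\varphi$ (nonnegative by the convexity of $\varphi$). The weighted arithmetic--geometric mean inequality, applied to the positive numbers $\partial_n\varphi/x_n,\lambda_1,\dots,\lambda_n$ with weights $a,1,\dots,1$ of total mass $n_a$, gives
$$J^{1/n_a} \;\le\; \frac{1}{n_a}\Bigl(a\,\frac{\partial_n\varphi}{x_n}+\Delta_A\varphi\Bigr).$$
The hypothesis $1\neq\gamma\ge 1-1/n_a$ means that the exponent $\beta:=n_a(1-\gamma)$ lies in $(-\infty,0)\cup(0,1]$. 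Applying the tangent-line inequality for $x\mapsto x^\beta$ at $x=1$ --- upper bound by concavity when $\beta\in(0,1]$, lower bound by convexity when $\beta<0$ --- to $x=J^{1/n_a}$, and then dividing by $1-\gamma$ (whose sign matches that of $\beta$), I would obtain in both cases the unified bound
$$\frac{J^{1-\gamma}-1}{1-\gamma} \;\le\; n_a\bigl(J^{1/n_a}-1\bigr) \;\le\; a\,\frac{\partial_n\varphi}{x_n}+\Delta_A\varphi - n_a.$$

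Multiplying this pointwise inequality by the nonnegative quantity $F^\gamma\omega$ and integrating over $\Omega$, the left-hand side becomes $\tfrac{1}{1-\gamma}\bigl(\int G^\gamma\omega - \int F^\gamma\omega\bigr)$ by the change of variables. Using the elementary identity
$$\nabla\cdot\bigl(x_n^a\,\nabla\varphi\bigr) \;=\; x_n^a\,\Delta\varphi + a\,x_n^{a-1}\,\partial_n\varphi,$$
the ``$a\partial_n\varphi/x_n+\Delta_A\varphi$'' part of the right-hand side, after multiplication by $F^\gamma\omega$, becomes $\int_\Omega F^\gamma\,\nabla\cdot(x_n^a\nabla\varphi)\,dx$. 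An integration by parts turns this into $-\int_\Omega\nabla F^\gamma\cdot\nabla\varphi\,\omega\,dx$ modulo a boundary contribution on $\{x_n=0\}$ that either vanishes (when $a>0$, since $x_n^a$ does) or equals $-\int_{\R^{n-1}} F^\gamma(\bar x,0)\partial_n\varphi(\bar x,0)\,d\bar x\le 0$ (when $a=0$, since $\nabla\varphi$ maps into $\overline\Omega$). Rearranging yields~\eqref{eq:integraionbypart}.

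The main technical difficulty is that $\varphi$ is only convex, so its distributional Hessian $D^2\varphi$ is a matrix-valued measure with nonnegative singular part and $D^2_A\varphi$ is merely its absolutely continuous component; the classical Gauss--Green formula does not apply to $\Delta_A\varphi$ directly. This is standardly handled by noting that, since $F^\gamma\omega\ge 0$, replacing $\Delta_A\varphi$ by the full distributional $\Delta\varphi$ only enlarges the relevant integral, after which integration by parts is valid against the $C^1$ test function $F^\gamma\omega$ --- whose compact support on $\overline\Omega$ (inherited from $F$) ensures all integrals converge and confines the boundary term to $\{x_n=0\}$, where it is treated as above.
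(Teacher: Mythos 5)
Your argument is essentially the paper's: Monge--Amp\`ere to convert the problem into a pointwise inequality for $J=(\partial_n\varphi/x_n)^a\det D^2_A\varphi$, then arithmetic--geometric mean, then integration by parts against $F^\gamma\omega$. One genuine stylistic improvement: the paper proves the pointwise bound $\tfrac{J^{1-\gamma}-1}{1-\gamma}\le a\tfrac{\partial_n\varphi}{x_n}+\Delta_A\varphi-n_a$ directly by a two-case weighted AM--GM computation (for $\gamma<1$ and $\gamma>1$ separately), whereas you split it into the tangent-line estimate $\tfrac{J^{1-\gamma}-1}{1-\gamma}\le n_a(J^{1/n_a}-1)$ (valid for all $\gamma\ge 1-1/n_a$, $\gamma\neq1$, by concavity/convexity of $x\mapsto x^\beta$ with $\beta=n_a(1-\gamma)\in(-\infty,0)\cup(0,1]$) followed by the single weighted AM--GM $n_aJ^{1/n_a}\le a\tfrac{\partial_n\varphi}{x_n}+\tr(D^2_A\varphi)$. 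This is a cleaner, case-free decomposition of the same inequality; I checked the signs in both regimes and it is correct.

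The one place your sketch is materially weaker than the paper is the justification of the integration by parts. You write as if one may directly pair the distributional Laplacian $\Delta_{\mathcal D'}\varphi$ against $F^\gamma\omega$ and then identify a signed boundary term on $\{x_n=0\}$. But $F^\gamma\omega$ is only $C^1$ on the open set $\Omega$ and its support may touch $\partial\Omega$, so it is \emph{not} an admissible test function for the measure $\Delta_{\mathcal D'}\varphi$ on $\Omega$; moreover $\partial_n\varphi$ need not have a well-defined trace on $\{x_n=0\}$ for merely convex (Lipschitz) $\varphi$, so the boundary integral you invoke is not a priori meaningful. The paper circumvents this with a two-stage approximation: first a cutoff $\theta_k(x)=\theta(kx_n)$ (so that $F_k=F^\gamma\theta_k$ is compactly supported inside $\Omega$), then a smooth mollification $F_{k,m}\in C_0^\infty(\Omega)$, with Fatou (using $L_A\varphi\ge0$) and the boundedness of $\nabla\varphi$ to pass $m\to\infty$, and finally the monotonicity of $\theta$ together with $\partial_n\varphi\ge0$ to drop the cutoff error when $k\to\infty$. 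Your final sentence gestures at this but does not carry it out; as written, the step ``integration by parts is valid against $F^\gamma\omega$'' is a gap, and the $a=0$ boundary-term computation rests on an unproved trace. The rest of the argument is sound.
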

\begin{proof}
Since $F$ and $G$ are compactly supported on $\overline{\Om}$, then $\nabla\varphi$ is bounded and $\varphi$ is Lipschitz (on the support of $F$). It follows from~\eqref{eq:MAequation1} that
$$F(x)x_n^a = G(\nabla\varphi(x))\left(\partial_n\varphi(x)\right)^{a}\det D_A^2\varphi(x),$$
with the notation $\partial_n = \frac{\partial}{\partial x_n}$. Then, for $1 -\frac{1}{n_a}\leq \gamma \not=1$, we have
$$ G(\nabla\varphi(x))^{\gamma-1} = F(x)^{\gamma-1}\left(\frac{\partial_n\varphi(x)}{x_n}\right)^{a(1-\gamma)}\left(\det D^2_A\varphi(x,t)\right)^{1-\gamma}. $$
We next claim that
\begin{align}\label{eq:basicinequality}
\frac{1}{1-\gamma}\,G(\nabla\varphi(x))^{\gamma-1}\leq F(x)^{\gamma -1}\,\frac{1-n_a(1-\gamma)}{1-\gamma} + F(x)^{\gamma-1}L_A\varphi(x),
\end{align}
where we denote
$$ L_A\varphi(x) = a\,\frac{\partial_n\varphi(x)}{x_n} +\Delta_A\varphi(x). $$
It is worth noting that the operator $L = \Delta + a\, x_n^{-1}\, \partial_n$ is the Laplacian associated to the weight $\om(x) = x_n^a$. To prove \eqref{eq:basicinequality}, it suffices to show that
\begin{equation}\label{eq:them}
\frac{1}{1-\gamma} A^{a(1-\gamma)} \lt(\det M\rt)^{1-\gamma} \leq \frac{1-n_a(1-\gamma)}{1-\gamma} + aA +\tr(M).
\end{equation}
for any $A > 0$ and any $n\times n$ nonnegative, symmetric matrix $M$. We divide the proof of \eqref{eq:them} in two cases corresponding to $\gamma < 1$ and to $\gamma > 1$.

If $1-\frac1{n_a}\leq \gamma < 1$, we have
\begin{align*}
\frac{1}{1-\gamma} A^{a(1-\gamma)} \lt(\det M\rt)^{1-\gamma}& = \frac1{1-\gamma} A^{a(1-\gamma)}\lt((\det M)^{\frac1n}\rt)^{n(1-\gamma)} 1^{1-n_a(1-\gamma)}\\
&\leq \frac1{1-\gamma}\lt(a(1-\gamma)A + n(1-\gamma)(\det M)^{\frac1n} + (1-n_a(1-\gamma))1 \rt)\\
&\leq \frac{1-n_a(1-\gamma)}{1-\gamma} + aA +\tr(M),
\end{align*}
where we use the elementary inequalities 
$$x^{\al_1}y^{\al_2}z^{\al_3} \leq \al_1x + \al_2 +\al_3z$$
for $x,y,z, \al_1,\al_2,\al_3 \geq 0$ and $\al_1 +\al_2+\al_3 =1$ in the first estimate, and  
$$(\det M)^{\frac1n} \leq \frac1n \tr(M)$$
in the second estimate.

If $\gamma > 1$, then \eqref{eq:them} is equivalent to the following inequality
\begin{equation}\label{eq:equiv}
\frac1{1+n_a(\gamma-1)}A^{a(1-\gamma)} \lt(\det M\rt)^{1-\gamma} + \frac{a(\gamma-1)}{1+n_a(\gamma-1)}A + \frac{\gamma-1}{1+n_a(\gamma-1)}\tr(M)\geq 1.
\end{equation}
Using two elementary inequalities mentioned above, we have
\begin{align*}
&\frac1{1+n_a(\gamma-1)}A^{a(1-\gamma)} \lt(\det M\rt)^{1-\gamma} + \frac{a(\gamma-1)}{1+n_a(\gamma-1)}A + \frac{\gamma-1}{1+n_a(\gamma-1)}\tr(M)\\
&\geq \frac1{1+n_a(\gamma-1)}A^{a(1-\gamma)} \lt(\det M\rt)^{1-\gamma} + \frac{a(\gamma-1)}{1+n_a(\gamma-1)}A + \frac{n(\gamma-1)}{1+n_a(\gamma-1)}(\det M)^{\frac1n}\\
&\geq \lt(A^{a(1-\gamma)} \lt(\det M\rt)^{1-\gamma}\rt)^{\frac1{1+n_a(\gamma-1)}}\, A^{\frac{a(\gamma-1)}{1+n_a(\gamma-1)}}\, \lt((\det M)^{\frac1n}\rt)^{\frac{n(\ga -1)}{1+ n_a(\ga -1)}}\\
&= 1,
\end{align*}
which is exactly \eqref{eq:equiv}.

Multiplying  both sides of~\eqref{eq:basicinequality} with $F(x)\omega(x)$, integrating the obtained inequality on $\Omega$, and using the definition of mass transport~\eqref{eq:push-forward1}, we get
\begin{equation}\label{eq:important}
\frac{1}{1-\gamma}\int_\Omega G^{\gamma}\, \omega\, dx \leq \frac{1-n_a(1-\gamma)}{1-\gamma}\int_\Omega F^\gamma\,\omega\, dx + \int_\Omega F^\gamma\, L_A\varphi \,\omega\, dx.
\end{equation}
To treat the integration by parts, let $\theta: [0,\infty) \to [0,1]$ be such that $\theta$ is $C^\infty$, increasing, $\theta \equiv 0 $ on $[0,1]$, and $\theta \equiv 1 $ on $[2,\infty)$. Set $\theta_k(x) = \theta(k x_n)$ for $x\in \Om$, and $F_k = F^\gamma\theta_k$ with $ k\geq 1$. Since $L_A\varphi \geq 0$, by Fatou's lemma
$$\int_\Om F^\gamma(x)\, L_A\varphi(x)\, \om(x)\, dx \leq \liminf_{k\to\infty}\int_\Om F_k(x)\, L_A\varphi (x)\, \om(x)\, dx.$$
We have $F_k\in C_0^1(\Omega)$, so there exists a sequence of nonegative functions $\{F_{k,m}\}_m\in C_0^\infty(\Om)$ such that
$$\lim_{m\to\infty} F_{k,m} = F_k\quad\, \text{ in }W^{1,1}(\Omega),$$
and $\lim_{m\to\infty}F_{k,m}(x) =F_k(x)$ for almost everywhere $x\in \Om$. Since $\Delta_A\varphi \leq \Delta_{\mathcal{D}'}\varphi$, where $\Delta_{\mathcal{D}'}\varphi$ stands for distributional Laplacian of $\varphi$, and by integration by parts, we get
$$\int_\Om F_{k,m}(x)\, L_A\varphi (x)\, \om(x)\, dx \leq -\int_\Om \nabla(F_{k,m})\cdot \nabla\varphi\, \om\, dx.$$
Since $L_A\varphi \geq 0$, let $m\to \infty$, by Fatou's lemma and the boundedness of $\nabla\varphi$, we get
$$\int_\Om F_k(x)\, L_A\varphi(x)\, \om(x)\, dx \leq -\int_\Om \nabla F_k\cdot\nabla\varphi\, \om\, dx.$$
Next, note that
\begin{align*}
-\int_\Om \nabla F_k\cdot\nabla\varphi\, \om\, dx&= -\int_\Om \theta_k \nabla F^\gamma\cdot \nabla\varphi\, \om\, dx - k\int_\Om F(x)^\gamma \theta'(kx_n)\partial_n\varphi(x) \om(x) dx\\
&\leq -\int_\Om \theta_k \nabla F^\gamma\cdot \nabla\varphi\, \om\, dx,
\end{align*}
since $\theta$ is increasing and $\partial_n\varphi(x)\geq 0$ because $\nabla\varphi(x) \in \Om$ for $x$ on the support of $F$. Letting $k\to\infty$, we get
$$\int_\Om F^\gamma\, L_A\varphi\, \om\, dx \leq -\int_\Om \nabla F^\gamma\cdot \nabla\varphi\, \om\, dx,$$
which, together with~\eqref{eq:important}, gives~\eqref{eq:integraionbypart}.
\end{proof}

With Lemma \ref{technical} on hand, we are now ready to prove the Propositions~\ref{maintheorem1} and ~\ref{mainL1}, and the Theorem\ref{maintheorem2}.

\emph{Proof of Proposition \ref{maintheorem1}\text}: Proposition~\ref{maintheorem1} follows from the following two propositions. The first one states a duality principle which is of independent interest.

\begin{proposition}\label{plarge1}
Let $1< p < n_a$ and $q = p/(p-1)$. For any function $f \in \dot{W}^{1,p}(\Omega,\omega)$ and $g\in L^{p^*}(\Om,\om)$ with $||f||_{L^{p^*}(\Omega,\omega)} = ||g||_{L^{p^*}(\Omega,\omega)}$, then
\begin{equation}\label{eq:Sobcompact1}
\frac{\int_\Omega |g(x)|^{p^*(1-\frac{1}{n_a})}\,\omega(x)\,dx}{\left(\int_\Omega ||y||^q\,|g(y)|^{p^*}\,\omega(y)\,dy\right)^{\frac{1}{q}}}\leq \frac{p(n_a-1)}{n_a(n_a-p)}||\nabla f||_{L^p(\Omega,\omega)},
\end{equation}
with equality if $f = g = h_{p,a}$.

As immediate consequences we have
\item(i) The duality principle
\begin{equation}\label{eq:dualprinciple}
\sup_{||g||_{L^{p^*}(\Omega,\omega)}=1}\frac{\int_\Omega |g(x)|^{p^*(1-\frac{1}{n_a})}\,\omega(x)\,dx}{\left(\int_\Omega ||y||^q\,|g(y)|^{p^*}\,\omega(y)\,dy\right)^{\frac{1}{q}}} = \frac{p(n_a-1)}{n_a(n_a-p)}\, \inf_{||f||_{L^{p^*}(\Omega,\omega)}=1}||\nabla f||_{L^p(\Omega,\omega)},
\end{equation}
with $h_{p,a}$ is extremal in both variational problems.

\item(ii) The sharp weighted Sobolev inequality: If $0\not= f\in \dot{W}^{1,p}(\Om,\om)$, then
\begin{equation}\label{eq:Sob}
\frac{||\nabla f||_{L^p(\Om,\om)}}{||f||_{L^{p^*}(\Om,\om)}}\geq ||\nabla h_{p,a}||_{L^p(\Om,\om)}.
\end{equation}
\end{proposition}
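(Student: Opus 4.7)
The plan is to apply Lemma~\ref{technical} at the critical exponent $\gamma = 1 - \tfrac{1}{n_a}$, for which the middle coefficient $1 - n_a(1-\gamma)$ vanishes. After the standard reduction to nonnegative, smooth, compactly supported $f, g$ on $\overline{\Om}$ with common normalization $A := \|f\|_{L^{p^*}(\Om,\om)}^{p^*} = \|g\|_{L^{p^*}(\Om,\om)}^{p^*}$, I set $F = f^{p^*}/A$ and $G = g^{p^*}/A$ and let $\nabla\vphi$ be the Brenier map sending $F\om\,dx$ to $G\om\,dx$. Lemma~\ref{technical} then collapses to
$$n_a \int_\Om G^\gamma\,\om\,dx \;\leq\; -\int_\Om \nabla F^\gamma\cdot\nabla\vphi\,\om\,dx.$$

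Next, to extract $\|\nabla f\|_{L^p(\Om,\om)}$, I split $\nabla F^\gamma\cdot\nabla\vphi = (F^{-1/q}\nabla F^\gamma)\cdot(F^{1/q}\nabla\vphi)$ and apply the norm/dual-norm H\"older inequality~\eqref{eq:Holderineq}. The transport factor is handled by~\eqref{eq:push-forward1}, which turns $\int \|\nabla\vphi\|^q F\om\,dx$ into $\int \|y\|^q G(y)\om(y)\,dy$. The gradient factor is handled by a direct identity: a short computation yields $F^{-1/q}\nabla F^\gamma = \gamma\, p^*\, A^{1/q-\gamma}\, f^{p^*(\gamma-1/q)-1}\,\nabla f$, and the choice $\gamma = 1 - 1/n_a$ forces $\gamma - 1/q = 1/p^*$. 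Hence the exponent of $f$ is identically zero and the $A$-prefactor becomes $A^{-1/p^*}$. Multiplying the two factors and using $1/p^* + 1/q = \gamma$, the $A^{-\gamma}$ powers cancel on the two sides and one arrives at~\eqref{eq:Sobcompact1} after noticing that $p^*\gamma/n_a = p(n_a-1)/(n_a(n_a-p))$.

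The equality case at $f = g = h_{p,a}$ is verified by noting that then $F = G$, so the Brenier map is the identity $\nabla\vphi(x) = x$ and $D^2\vphi = I$; this saturates the AM--GM step inside Lemma~\ref{technical}. The H\"older step is sharp thanks to~\eqref{eq:gradientofnorm}: $h_{p,a}^{p^*\gamma}$ is a decreasing function of $\|x\|$, its gradient is a negative multiple of $q\|x\|^{q-1}\nabla(\|\cdot\|)(x)$, and pairing with $x$ is optimal because $x\cdot\nabla(\|\cdot\|)(x) = \|x\|$ while $\|\nabla(\|\cdot\|)(x)\|_* = 1$. Consequence (i) is then the $\sup_g / \inf_f$ reformulation of~\eqref{eq:Sobcompact1}, with $h_{p,a}$ simultaneously extremizing both sides. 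Consequence (ii), the sharp weighted Sobolev inequality~\eqref{eq:Sob}, follows by fixing $g = h_{p,a}$ in~\eqref{eq:Sobcompact1}: the left-hand side becomes the explicit constant $\frac{p(n_a-1)}{n_a(n_a-p)}\|\nabla h_{p,a}\|_{L^p(\Om,\om)}$ (thanks to the equality case), and then letting $f$ vary yields $\|\nabla f\|_{L^p}/\|f\|_{L^{p^*}} \geq \|\nabla h_{p,a}\|_{L^p}/\|h_{p,a}\|_{L^{p^*}}$.

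The main delicate point is the accounting in the H\"older step for a general, possibly non-Euclidean norm: one must keep $\|\cdot\|_*$ on the gradient side and $\|\cdot\|$ on the transport side coherently, and then verify the equality conditions via~\eqref{eq:gradientofnorm} rather than any Euclidean-specific identity. A secondary technical point is that $h_{p,a}$ is not compactly supported when $p > 1$, so substituting it into Lemma~\ref{technical} requires a truncation/mollification approximation; once this is in place, incorporating the full scaling/translation family $h_{p,a}(\lambda(\x-\x_0,x_n))$ recovers the complete extremizer statement of Proposition~\ref{maintheorem1}.
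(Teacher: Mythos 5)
Your proposal is correct and follows essentially the same route as the paper: apply Lemma~\ref{technical} at the critical exponent $\gamma=1-1/n_a$ so the middle term drops, then Hölder's inequality for the norm/dual-norm pair together with the push-forward identity to turn $\int F\|\nabla\vphi\|^q\om$ into $\int\|y\|^q G\,\om$, and finally verify equality at $h_{p,a}$ via $\nabla\vphi=\mathrm{id}$, saturation of AM--GM, and~\eqref{eq:gradientofnorm}. The only cosmetic differences are that the paper simply normalizes $\|f\|_{L^{p^*}}=\|g\|_{L^{p^*}}=1$ instead of carrying the constant $A$ through, and for the equality verification it computes the integration by parts directly (using that $\partial_n\vphi=x_n$ vanishes on $\partial\Omega$) rather than invoking a truncation/mollification argument; both routes lead to the same conclusion.
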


\begin{proof}

By the homogeneity, we can assume $||f||_{L^{p^*}(\Omega,\omega)}=||g||_{L^{p^*}(\Omega,\omega)} =1$. It is well-known that if $f\in \dot{W}^{1,p}(\Om,\om)$ then $||\nabla |f|\,||_* \leq ||\nabla f||_*$. Thus, without loss of generality, we may assume that $f$ and $g$ are nonegative. By standard approximation, we can assume that $f$ and $g$ are compactly supported on $\overline{\Om}$ and $f$ is in $C^1(\overline{\Om})$. Applying Lemma \ref{technical} to $F = f^{p^*}$, $G = g^{p^*}$ and $\gamma = 1 -\frac1{n_a}$, we get
\begin{equation}\label{eq:dualprinciple1}
\int_\Om g^{p^*(1-\frac1{n_a})}\, \om\, dx \leq -\frac{(n_a-1)p}{n_a(n_a-p)}\int_\Om \nabla f\cdot f^{\frac{p^*}q}\nabla\varphi\, \om\, dx.
\end{equation}
Applying H\"older's inequality~\eqref{eq:Holderineq}, we obtain
\begin{equation}\label{eq:dualprinciple2}
\int_\Om g^{p^*(1-\frac1{n_a})}\, \om\, dx \leq \frac{(n_a-1)p}{n_a(n_a-p)}||\nabla f||_{L^p(\Om,\om)}\left(\int_\Om f^{p^*}||\nabla\varphi||^q\, \om\, dx\right)^{\frac1q}.
\end{equation}
Inequality~\eqref{eq:Sobcompact1} follows from~\eqref{eq:dualprinciple2} and the definition of mass transportation.

If $f = g = h_{p,a}$, we must have $\nabla\varphi(x) =x$ for all $x\in \Om$, hence $D^2_A\varphi =D^2\varphi = \id_n$. This implies immediately that $\Delta_A\varphi$ becomes classical Laplacian, and  equality in~\eqref{eq:basicinequality} holds for all $1-\frac1{n_a} \leq \gamma\not=1$.  We can then use a simple integration by parts to get
$$\int_\Om h_{p,a}^{\frac{(n_a-1)p}{n_a-p}} L_A\varphi\, \om\, dx = -\frac{(n_a-1)p}{n_a-p}\int_\Om \nabla f\cdot f^{\frac{p^*}q}\nabla\varphi\, \om\, dx,$$
since in this case $\partial_n\varphi = 0$ on $\partial \Om$. Therefore,~\eqref{eq:dualprinciple1} becomes an equality. Moreover, if $f=g =h_{p,a}$ then
$$\nabla f(x) = -\frac{n_a-p}{p-1}\left(\sigma_{p,a}+||x||^q\right)^{-\frac{n_a}p} ||x||^{q-1}x^*$$
almost everywhere. This ensures the equality in H\"older's inequality~\eqref{eq:Holderineq}. So there is equality in~\eqref{eq:dualprinciple2} (and then in~\eqref{eq:Sobcompact1}) if $f= g=h_{p,a}$. Proposition \ref{plarge1} is proved. 
\end{proof}
The case of equality is treated in the following proposition:
\begin{proposition}\label{equality}
A function $f\in\dot{W}^{1,p}(\Om,\om)$ is optimal in the weighted Sobolev inequality~\eqref{eq:swSobp} if and only if there exist $c\in\R$, $\lambda \not=0$ and $\x_0\in \R^{n-1}$ such that
$$f(x) = c\, h_{p,a}(\lambda(\x-\x_0,x_n)).$$
\end{proposition}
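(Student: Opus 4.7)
The plan is to read off the equality cases by tracking when each inequality in the proof of Proposition~\ref{plarge1} saturates. Let $f\in\dot W^{1,p}(\Om,\om)$ be an extremizer of~\eqref{eq:swSobp}. Since $\|\nabla|f|\|_*\le\|\nabla f\|_*$ and the inequality is homogeneous, I may assume $f\ge 0$ and $\|f\|_{L^{p^*}(\Om,\om)}=1$. Apply Proposition~\ref{plarge1} with $g=h_{p,a}$: by the duality principle~\eqref{eq:dualprinciple}, both $f$ and $h_{p,a}$ achieve the infimum of $\|\nabla\cdot\|_{L^p(\Om,\om)}$ over the unit sphere of $L^{p^*}(\Om,\om)$, so the inequalities~\eqref{eq:dualprinciple1} and~\eqref{eq:dualprinciple2} must be equalities. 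In particular, the pointwise inequality~\eqref{eq:basicinequality} from Lemma~\ref{technical} must saturate almost everywhere on the support of $f$.

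With $\gamma=1-1/n_a$, the proof of~\eqref{eq:them} in the case $\gamma<1$ combined the weighted AM--GM inequality (whose third weight $1-n_a(1-\gamma)$ vanishes, leaving the exponents $a/n_a$ and $n/n_a$) with $(\det M)^{1/n}\le\tr(M)/n$ applied to $M=D^2_A\varphi$. Equality forces, at almost every $x$ in the support of $f$, both $D^2_A\varphi(x)=\mu(x)\,I_n$ for some scalar $\mu(x)>0$ and $\partial_n\varphi(x)/x_n=(\det D^2_A\varphi(x))^{1/n}=\mu(x)$, whence $\partial_n\varphi(x)=\mu(x)\,x_n$ and $\partial_{ii}\varphi(x)=\mu(x)$, $\partial_{ij}\varphi(x)=0$ for $i\ne j$. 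Differentiating $\partial_n\varphi=\mu x_n$ in $x_n$ and matching with $\partial_{nn}\varphi=\mu$ gives $x_n\partial_n\mu=0$, hence $\partial_n\mu=0$; differentiating in $x_i$ for $i<n$ and using $\partial_{in}\varphi=0$ yields $x_n\partial_i\mu=0$, so $\partial_i\mu=0$. Therefore $\mu$ is a positive constant, $\nabla\varphi(x)=\mu x+b$ for some $b\in\R^n$, and the relation $\partial_n\varphi=\mu x_n$ forces $b_n=0$. Writing $b=-\mu(\x_0,0)$ with $\x_0\in\R^{n-1}$ yields $\nabla\varphi(x)=\mu(\x-\x_0,x_n)$.

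Substituting this affine Brenier map into the weighted Monge--Amp\`ere equation~\eqref{eq:MAequation1},
\begin{equation*}
f^{p^*}(x)\,x_n^a=h_{p,a}^{p^*}(\nabla\varphi(x))\,(\mu x_n)^a\,\mu^n=\mu^{n_a}\,h_{p,a}^{p^*}(\mu(\x-\x_0,x_n))\,x_n^a,
\end{equation*}
and using $n_a/p^*=(n_a-p)/p$, gives $f(x)=\mu^{(n_a-p)/p}\,h_{p,a}(\mu(\x-\x_0,x_n))$, which is the asserted form with $\lambda=\mu$ and $c=\mu^{(n_a-p)/p}$; general $c\in\R$ is obtained by dropping the normalization and allowing a sign. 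The converse---that every such function achieves equality---was already established at the end of the proof of Proposition~\ref{plarge1}.

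The main obstacle is the regularity of the Brenier potential $\varphi$: the Aleksandrov Hessian is defined only almost everywhere, so the formal differentiations of $\mu(x)$ used to conclude that $\mu$ is constant must be justified rigorously. The cleanest route is to invoke Caffarelli's $C^{2,\alpha}$ regularity for the Monge--Amp\`ere equation: for $p>1$, the target density $h_{p,a}^{p^*}\om$ is smooth and strictly positive on $\Om$, and the extremizer $f$ is smooth and positive on the interior of its support by standard elliptic regularity for its Euler--Lagrange equation, so $\varphi$ is $C^2$ on this interior and all identities above become classical. Alternatively, one can work directly with the distributional Hessian of the convex function $\varphi$ and Fubini's theorem along coordinate slices to pass from the almost everywhere identities to $\mu\equiv\mathrm{const}$ without full regularity.
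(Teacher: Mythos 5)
Your plan is the right one in spirit — trace equality back through the mass-transport chain — and the final computation from the Monge--Amp\`ere equation is correct. But there are two substantive gaps relative to what the paper actually has to do.

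First, and most seriously, you assert that because $f$ is an extremizer, the \emph{intermediate} inequalities~\eqref{eq:dualprinciple1} and~\eqref{eq:dualprinciple2} must saturate. But those intermediate steps were established in the proof of Proposition~\ref{plarge1} only for $f$ that is compactly supported on $\overline\Om$ and $C^1$; for a general $f\in\dot W^{1,p}(\Om,\om)$ the final inequality~\eqref{eq:Sobcompact1} holds by approximation, but that gives no information about whether the \emph{intermediate} inequalities hold, let alone saturate, for the extremizer itself. The technical heart of the paper's proof is precisely the approximation/cut-off argument (the functions $f_{k,\epsilon,\delta}$ built from $\theta_k$, the dilation by $1-\epsilon$, the cut-off $\chi(\epsilon\,\cdot)$, and mollification by $\psi_\delta$) that pushes the integration-by-parts inequality through to the actual extremizer and yields~\eqref{eq:IbP2} without any smoothness hypothesis. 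Your proposal skips this entirely. Your fallback — Caffarelli regularity of $\varphi$ via elliptic regularity for the Euler--Lagrange equation of the extremizer — is not a shortcut here: for $p\neq 2$ the Euler--Lagrange equation is a degenerate $p$-Laplacian with weight, one would first need to show the extremizer is positive on all of $\Om$ (itself part of what is being proved), and the paper deliberately avoids this by working distributionally. The paper instead shows, from equality in H\"older, that $\mathrm{supp}\,f = \overline O\cap\overline\Om$, and then argues as in~\cite{CNV} that $D^2_{\mathcal D'}\varphi$ has no singular part; constancy of the factor multiplying $\id_n$ then follows from a distributional argument for convex functions, not from classical differentiation of $\mu(x)$ as you attempt.

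Second, you treat only the case $a>0$. When $a=0$, the term $\partial_n\varphi/x_n$ carries weight $a/n_a=0$ in the arithmetic--geometric inequality, so equality there gives no constraint forcing $\partial_n\varphi=\mu x_n$; one only gets $D^2_{\mathcal D'}\varphi=\lambda\,\id_n$, hence $\nabla\varphi(x)=\lambda(\x-\x_0,x_n-t_0)$ with a priori arbitrary $t_0$. The paper then rules out $t_0\neq 0$ by observing that the support of $f^{p^*}$ would then be $\Om+t_0e_n$ with $t_0\ge 0$, and membership of $f$ in $\dot W^{1,p}(\Om,\om)$ forces $t_0=0$. This case is missing from your argument, and your derivation of $b_n=0$ silently used $a>0$.

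In short, the closing computation is fine, but the two ingredients that make the paper's proof work — the approximation argument giving~\eqref{eq:IbP2} for a general extremizer, and the separate treatment of $a=0$ via the support argument — are absent.
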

\begin{proof}
We follow the idea of Cordero-Erausquin, Nazaret and Villani, which amounts to trace back equality cases in the mass transport proof. For this, we will show that the intermediate step~\eqref{eq:IbP2} below is valid in general (without regularity assumptions).

First, note that it is enough to prove this proposition for nonnegative function $f$ and $\int_\Om f^{p^*}\om = 1$. Let $\nabla\varphi$ be the Brenier map pushing $f(x)^{p^*}\om(x)\, dx$ forward to $h_{p,a}(x)^{p^*}\om(x)\,dx$. We have $f^{\frac{p^*}{q}}\nabla\varphi\in L^q(\Om,\om)$ since $h_{p,a}(x)^{\frac{p^*}{q}}x \in L^q(\Om,\om)$. 

Denote $O$ be the interior of the set $\{x: \varphi(x) < \infty\}$, we know that support of $f$ is contained in $\overline{O}$. Fix $x_0=(\x_0,t_0)\in O\cap\Om$. Choose $k$ such that $\frac1k < t_0$ and denote $f_k = f\theta_k$, where $\theta_k(x) = \theta(kx_n)$ is as in the proof of Lemma \ref{technical}. The support of $f_k$ is contained in $\{x_n \geq \frac1k\}$ and  $f_k\in \dot{W}^{1,p}(\R^n)$. For $\epsilon > 0$ small enough ($\epsilon\ll t_0 -\frac1k$), we define
$$f_{k,\epsilon} = \min\left\{f_k\left(x_0 + \frac{x-x_0}{1-\epsilon}\right), f_k(x) \chi(\epsilon x)\right\}$$
where $\chi$ is a $C^\infty$ cut-off function with $0\leq \chi \leq 1$, $\chi(x) = 1$ for $|x|\leq \frac12$, $\chi(x) = 0$ for $|x|\geq 1$. Then $f_{k,\epsilon}\in \dot{W}^{1,p}(\Om,\om)$. For $\delta >0$, denote $f_{k,\epsilon,\delta} = f_{k,\epsilon}\star \psi_\delta$, where $\psi_\delta = \delta^{-n}\psi(\frac{\cdot}{\delta})$ and $\psi\in C^\infty_0(\R^n)$ is a positive function, $\int_{\R^n} \psi =1$. For $\delta $ small enough ($\delta$ is smaller than the distance from support of $f_{k,\epsilon}$ to $\partial O$),  $f_{k,\epsilon,\delta}$ is compactly supported in $O$ and smooth, i.e.. $f_{k,\epsilon,\delta}\in C^\infty_0(O)$, and, since $L_A\varphi \leq L_{\mathcal{D}'}\varphi$, we have
\begin{equation}\label{eq:IbP}
\int (f_{k,\epsilon,\delta})^{\frac{(n_a-1)p}{n_a-p}}L_A\varphi\, \om \leq -\frac{(n_a-1)p}{n_a-p}\int (f_{k,\epsilon,\delta})^{\frac{p^*}{q}}\nabla f_{k,\epsilon,\delta}\cdot \nabla\varphi\, \om.
\end{equation}
Since the support of $f_{k,\epsilon}$ is contained in the set $\{x_n \geq \epsilon t_0 + \frac{1-\epsilon}k\}$ for any $k,\epsilon$, we have $f_{k,\epsilon,\delta}\to f_{k,\epsilon}$ when $\delta \to 0$.

By using the argument of Cordero-Erausquin, Nazaret and Villani \cite[Proof of Lemma 7]{CNV}, we can let $\delta\to 0$ and then let $\epsilon \to 0$ in~\eqref{eq:IbP} to get
\begin{align}\label{eq:IbP1}
\int_\Om f_k^{\frac{(n_a-1)p}{n_a-p}} L_A\varphi\, \om &\leq -\frac{(n_a-1)p}{n_a-p} \int_\Om f_k^{\frac{p^*}{q}} \nabla f_k\cdot \nabla\varphi\, \om\notag\\
& =-\frac{(n_a-1)p}{n_a-p}\int_\Om f_k^{\frac{p^*}{q}}\theta_k \nabla f\cdot \nabla\varphi \,\om \notag\\
&\, \, \, \, -\frac{(n_a-1)p}{n_a-p}\int_\Om f^{\frac{(n_a-1)p}{n_a-p}}\,\theta(kx_n)^{\frac{p^*}{q}}\,k\,\theta'(kx_n)\,\partial_n\varphi \,\om.
\end{align}
Since $\theta$ is increasing, we get, by letting $k\to\infty$,
\begin{equation}\label{eq:IbP2}
\int_\Om f^{\frac{(n_a-1)p}{n_a-p}} L_A\varphi\, \om\leq -\frac{(n_a-1)p}{n_a-p}\int_\Om f^{\frac{p^*}{q}} \nabla f\cdot \nabla\varphi \,\om.
\end{equation}
Applying inequality~\eqref{eq:important} to $f^{p^*}$, $h_{p,a}^{p^*}$ and $\gamma =1-\frac1{n_a}$ (this inequality always holds without further assumption of smoothness for $F$ or of compact support for $G$), we obtain
\begin{equation}\label{eq:IbP3}
\int_\Om h_{p,a}^{\frac{(n_a-1)p}{n_a-p}} \, \om \leq -\frac{(n_a-1)p}{n_a(n_a-p)}\int_\Om f^{\frac{p^*}{q}} \nabla f\cdot \nabla\varphi \,\om.
\end{equation}
By using H\"older's inequality, we get the sharp weighted Sobolev inequality~\eqref{eq:swSobp}. Since $f$ is an optimal function, then we must have equality for H\"older's inequality. This implies that the support of $f$ is $\overline{O}\cap\overline{\Om}$ (see the proof of \cite[Proposition $6$]{CNV}).

Then, the argument in \cite[Proof of Proposition $6$]{CNV} shows that $D^2_{\mathcal{D}'}\varphi$ has no singular part on $O$ (here, we replace $0$ in the argument of \cite{CNV} by $x_0\in O\cap \Om$).

Finally, we must have equality in the arithmetic-geometric inequality
$$\left(\frac{\partial_n\varphi}{x_n}\right)^{\frac{a}{n_a}} \left(\det D_{\mathcal{D}'}^2\varphi\right)^{\frac1{n_a}}\leq \frac1{n_a} L_{\mathcal{D}'}\varphi.$$

\item(i) If $a > 0$, then $D_{\mathcal{D}'}^2\varphi = \lambda\, \id_n$ and $\frac{\partial_n\varphi}{x_n} =\lambda$ for some $\lambda >0$, hence for every $x=(\overline x, x_n)\in \Om\cap O$, $\nabla\varphi(x) =\lambda (\x-\x_0,x_n)$ for some $\x_0\in \R^{n-1}$ and the (interior) of the support of $f^{p^\ast}$ is  $\Om$  (see \cite{CNV}). This  means that $f(x)=h_{p,a}(\lambda(\x-\x_0,x_n))$ as claimed.

\item(ii) If $a =0$, then $D_{\mathcal{D}'}^2\varphi = \lambda\, \id_n$ for some $\lambda > 0$, hence $\nabla\varphi(x) =\lambda (\x-\x_0,x_n -t_0)$ on the support of $f$ for some $(\x_0,t_0)\in \R^n$. Since the Brenier map sends the interior of the support of $f^{p^\ast}$ to the interior of support of $h_{p,a}^{p^\ast}$, we must have $\textrm{int(supp $f$)}=\Omega +  t_0 e_n$ with $t_0\ge 0$ and $f(x)=\mathbf1_{\Omega +  t_0 e_n}(x) \cdot h_{p,a}(\lambda(\x-\x_0,x_n-t_0))$. But since $f\in \dot{W}^{1,p}(\Om,\om)$ it forces $t_0=0$, as wanted.
\end{proof}

\emph{Proof of Proposition~\ref{mainL1}\text}: Proposition~\ref{mainL1} follows from the following proposition:
\begin{proposition}\label{p=1}
If $f\not=0$ is a $C^1$, compactly supported function on $\Om$, then
\begin{equation}\label{eq:L1case}
\frac{||\nabla f||_{L^1(\Om,\om)}}{||f||_{L^{\frac{n_a}{n_a-1}}(\Om,\om)}} \geq n_a\left(\int_B x_n^a \,dx\right)^{\frac1{n_a}}.
\end{equation}
This inequality extends to functions having $\om-$bounded variation, with equality if $f = h_{1,a}$. 
\end{proposition}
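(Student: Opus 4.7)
The plan is to apply Lemma \ref{technical} at the endpoint exponent $\gamma=(n_a-1)/n_a$. At this $\gamma$ one has $1-n_a(1-\gamma)=0$, so the $\int_\Om F^\gamma\,\om\,dx$ term on the right of \eqref{eq:integraionbypart} vanishes, and if we choose $F=f^{n_a/(n_a-1)}$ then $F^\gamma=f$ directly. Consequently, in contrast to the $p>1$ case of Proposition \ref{plarge1}, no H\"older step is needed: a pointwise $L^1$-$L^\infty$ bound on $\nabla f\cdot\nabla\varphi$ will close the estimate.

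After replacing $f$ by $|f|$ (which does not enlarge $\|\nabla f\|_{L^1(\Om,\om)}$) and rescaling, I may assume $f\ge0$ and $\|f\|_{L^{n_a/(n_a-1)}(\Om,\om)}=\|h_{1,a}\|_{L^{n_a/(n_a-1)}(\Om,\om)}=1$. Set
\begin{equation*}
F=f^{n_a/(n_a-1)},\qquad G=h_{1,a}^{n_a/(n_a-1)}=\frac{\ind_{B\cap\Om}}{\int_{B\cap\Om}x_n^a\,dx}.
\end{equation*}
Both are nonnegative probability densities for $\om\,dx$, compactly supported on $\overline\Om$, and $F^\gamma=f$ is $C^1$, so Lemma \ref{technical} applies to the Brenier map $\nabla\varphi$ pushing $F\om\,dx$ onto $G\om\,dx$ and gives
\begin{equation*}
n_a\int_\Om h_{1,a}\,\om\,dx\;\le\;-\int_\Om \nabla f\cdot\nabla\varphi\;\om\,dx.
\end{equation*}
The left-hand side equals $n_a\bigl(\int_{B\cap\Om}x_n^a\,dx\bigr)^{1/n_a}$. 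Since $G$ is supported in $\overline{B\cap\Om}\subset\overline B$, the push-forward property forces $\nabla\varphi(x)\in\overline B$, so $\|\nabla\varphi(x)\|\le1$, for $F\om\,dx$-a.e.\ $x$; combined with the dual-norm inequality $-\nabla f\cdot\nabla\varphi\le\|\nabla f\|_*\|\nabla\varphi\|\le\|\nabla f\|_*$ this yields
\begin{equation*}
-\int_\Om\nabla f\cdot\nabla\varphi\;\om\,dx\;\le\;\|\nabla f\|_{L^1(\Om,\om)},
\end{equation*}
establishing \eqref{eq:L1case} under the normalization, and hence for all $f$ by homogeneity.

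Equality at $f=h_{1,a}$ is verified directly: the weighted total variation of the indicator $\ind_{B\cap\Om}$ is concentrated on $\partial B\cap\Om$ with outward unit normal $x^*=\nabla(\|\cdot\|)(x)$ satisfying $\|x^*\|_*=1$, so
\begin{equation*}
\|\nabla h_{1,a}\|_{L^1(\Om,\om)}=\frac{P_{\om,\|\cdot\|}(B,\Om)}{\bigl(\int_{B\cap\Om}x_n^a\,dx\bigr)^{(n_a-1)/n_a}}=n_a\Bigl(\int_{B\cap\Om}x_n^a\,dx\Bigr)^{1/n_a},
\end{equation*}
using the identity $P_{\om,\|\cdot\|}(B,\Om)=n_a\int_{B\cap\Om}\om\,dx$ recalled before the proposition; horizontal translates and dilations $c\,h_{1,a}(\lambda(\bar x-\bar x_0,x_n))$ inherit equality by scaling invariance of the ratio. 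For the extension to $f\in BV_\om(\Om)$, a standard mollification combined with the vertical cutoff $\theta_k$ from the proof of Lemma \ref{technical} (keeping supports away from $\{x_n=0\}$) yields smooth, compactly supported $f_k\to f$ in $L^{n_a/(n_a-1)}(\Om,\om)$ with $\|\nabla f_k\|_{L^1(\Om,\om)}\to\int_\Om\om\|\nu_f\|_*\,d|Df|$, and \eqref{eq:L1case} passes to the limit.

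The main technical point is that the target density $G$ is only an indicator divided by its mass, so it has no regularity beyond boundedness. Fortunately Lemma \ref{technical} requires smoothness only of $F^\gamma$ and needs $G$ merely to be a probability density compactly supported on $\overline\Om$, so no regularization of $h_{1,a}$ is necessary and the scheme just described goes through directly.
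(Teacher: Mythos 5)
Your proof is correct and follows essentially the same mass-transport argument as the paper: apply Lemma~\ref{technical} at $\gamma=1-\frac1{n_a}$ so the $\int F^\gamma$ term vanishes, then use $\|\nabla\varphi\|\le1$ (from $\supp G\subset\overline B$) in place of H\"older. In fact you are slightly more careful than the paper, which writes ``$G=h_{1,a}$'' where the probability density should be $G=h_{1,a}^{n_a/(n_a-1)}$ as you have it, though both versions lead to the same displayed inequality.
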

\begin{proof}
As in the proof of Proposition \ref{plarge1}, $f$ can be  assumed to be nonnegative and such that $||f||_{L^{\frac{n_a}{n_a-1}}(\Om,\om)}=1$. Denote $F = f^{\frac{n_a}{n_a-1}}$ and $G = h_{1,a}$ and let $\nabla\varphi$ be the Brenier map pushing $F\,\om \, dx$ forward to $G\,\om\, dx$. Then applying Lemma \ref{technical} with $\gamma=1-\frac1{n_a}$, we get
\begin{equation}\label{eq:L1sob}
n_a\left(\int_B x_n^a\, dx\right)^{\frac1{n_a}} \leq \int_\Om \nabla f\cdot (-\nabla\varphi)\, \om\, dx.
\end{equation}
Since $\nabla\varphi \in B$, then $||-\nabla\varphi|| \leq 1$, this implies
$$n_a \left(\int_B x_n^a\, dx\right)^{\frac1{n_a}} \leq \int_\Om ||\nabla f||_*\, \om\, dx = ||\nabla f||_{L^1(\Om,\om)}.$$
By an approximation argument, we can extend inequality~\eqref{eq:L1case} to functions having $\om-$bounded variation. And if $f =h_{1,a}$ then
\begin{align*}
\int_\Om \om \,||\nu_f||_*\, d(|Df|) & = \left(\int_{B\cap \Om}x_n^a \,dx\right)^{-\frac{n_a-1}{n_a}}\,\int_\Om \om\, ||\nu_B||_*\, d(|D\ind_B|)\\
&=\left(\int_{B\cap \Om}x_n^a \,dx\right)^{-\frac{n_a-1}{n_a}}\, P_{\om,||\cdot||}(B,\Om)\\
& = n_a\left(\int_{B\cap \Om} x_n^a\, dx\right)^{\frac1n_a}.
\end{align*}
This shows that equality holds in~\eqref{eq:L1case}.
\end{proof}

%%%%%%%%%%%%%%%%%%%%%%%%%%%%%%%%%%%%%%%%%%%%%%%%%%%%%%%%%%%%%%%%%%

{\bf Proof of Theorem \ref{maintheorem2}}: As before, Theorem \ref{maintheorem2} will follow from the following duality principle. 

\begin{proposition}\label{GNineq}
Let $a\geq 0$, $p\in (1,n_a)$ and $\alpha \in (0,\frac{n_a}{n_a -p}]$, $\al\not=1$. Let $f\in \dot{W}^{1,p}(\Om,\om)$ and $g\in L^{\alpha p}(\Om,\om)$ be such that $||f||_{L^{\alpha p}(\Omega,\omega)}=||g||_{L^{\alpha p}(\Omega,\omega)} =1$. Then, for all $\mu >0$
\begin{align}\label{eq:nonhomogeneousGNineq}
\frac{\alpha p}{(\alpha -1)p_\alpha}&\int_\Omega |g|^{p_\alpha}\,\om\, dy -\frac{\mu^q}{q}\int_\Omega |g(y)|^{\alpha p}\, ||y||^q\,\omega(y)\,dy\notag\\
&\leq \frac{\alpha p -n_a(\alpha -1)}{(\alpha -1)p_\alpha}\int_\Omega |f(x)|^{p_\alpha}\, \omega(x)\, dx +  \frac{1}{p\mu^p}\int_\Omega||\nabla f(x)||_*^p\, \omega(x)\, dx,
\end{align}
where 
$$ p_\alpha =\alpha p - \alpha +1.$$
When $\mu =\mu_p :=q^{\frac{1}{q}}$, then equality in~\eqref{eq:nonhomogeneousGNineq} holds if $f = g = h_{\alpha,p,a}$.

As immediate consequences we have
\item(i) The dual principle
\begin{align}\label{eq:dualforGN}
&\sup_{||g||_{L^{\alpha p}(\Om,\om)=1}}\frac{\alpha p}{(\alpha -1)p_\alpha}\int_\Omega |g|^{p_\alpha}\,\om\, dy -\frac{\mu_p^q}{q}\int_\Omega |g(y)|^{\alpha p}\, ||y||^q\,\omega(y)\,dy\notag\\
&= \inf_{||f||_{L^{\alpha p}(\Om,\om)=1}}\frac{\alpha p -n_a(\alpha -1)}{(\alpha -1)p_\alpha}\int_\Omega |f|^{p_\alpha}\, \om\, dx +  \frac{1}{p\mu_p^p}\int_\Omega||\nabla f||_*^p\, \om\, dx,
\end{align}
and $h_{\alpha,p,a}$ is extremal in both variational problems.
\item(ii) The sharp weighted GN inequality: if $0\not= f\in \dot{W}^{1,p}(\Om,\om)$, then

for $\alpha > 1$,
\begin{equation}\label{eq:GNpositive}
\frac{||\nabla f||_{L^p(\Om,\om)}^\theta ||f||_{L^{p_\alpha}(\Om,\om)}^{1-\theta}}{||f||_{L^{\alpha p}(\Om,\om)}}\geq ||\nabla h_{\alpha,p,a}||_{L^p(\Om,\om)}^\theta ||h_{\alpha,p,a}||_{L^{p_\alpha}(\Om,\om)}^{1-\theta},
\end{equation}
where $\theta$ is given by~\eqref{eq:theta}.

for $\alpha < 1$,
\begin{equation}\label{eq:GNcompact}
\frac{||\nabla f||_{L^p(\Om,\om)}^\theta ||f||_{L^{\alpha p}(\Om,\om)}^{1-\theta}}{||f||_{L^{p_\alpha}(\Om,\om)}}\geq \frac{||\nabla h_{\alpha,p,a}||_{L^p(\Om,\om)}^\theta}{ ||h_{\alpha,p,a}||_{L^{p_\alpha}(\Om,\om)}},
\end{equation}
where $\theta$ is given by~\eqref{eq:theta1}.
\end{proposition}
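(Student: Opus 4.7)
The plan is to mimic the mass transport strategy used for Proposition~\ref{plarge1}, replacing the H\"older closing step by Young's inequality~\eqref{eq:Youngineq} with a free parameter $\mu$; this extra degree of freedom produces precisely the interpolation between the two norms $L^{\alpha p}$ and $L^{p_\alpha}$ characterizing the GN inequality. By density and the pointwise bound $||\nabla|f|\,||_*\le||\nabla f||_*$, reduce to $f,g\ge 0$ compactly supported on $\overline{\Omega}$ with $f\in C^1(\overline{\Omega})$ and $\int_{\Omega}f^{\alpha p}\omega\,dx=\int_\Omega g^{\alpha p}\omega\,dx=1$. Apply Lemma~\ref{technical} with $F=f^{\alpha p}$, $G=g^{\alpha p}$, and the key choice
\[
\gamma \;=\; \frac{p_\alpha}{\alpha p} \;=\; 1+\frac{1-\alpha}{\alpha p}.
\]
Then $\gamma\neq 1$ since $\alpha\neq 1$, and the admissibility condition $\gamma\ge 1-1/n_a$ rearranges exactly to the standing hypothesis $\alpha\le n_a/(n_a-p)$. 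Since $F^\gamma=f^{p_\alpha}$, $G^\gamma=g^{p_\alpha}$, and $\nabla F^\gamma=p_\alpha f^{p_\alpha-1}\nabla f$, dividing the conclusion of the lemma by $p_\alpha$ (with appropriate sign handling when $1-\gamma<0$, i.e. $\alpha<1$) gives
\[
\frac{\alpha p}{(\alpha-1)p_\alpha}\int_\Omega g^{p_\alpha}\omega\,dx \;\le\; \frac{\alpha p-n_a(\alpha-1)}{(\alpha-1)p_\alpha}\int_\Omega f^{p_\alpha}\omega\,dx -\int_\Omega f^{p_\alpha-1}\nabla f\cdot\nabla\varphi\,\omega\,dx,
\]
where $\nabla\varphi$ is the Brenier map pushing $F\omega\,dx$ onto $G\omega\,dx$.

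Bound the last integrand by Young's inequality~\eqref{eq:Youngineq} applied pointwise with $X=\nabla f$ and $Y=-f^{p_\alpha-1}\nabla\varphi$; the algebraic identity $q(p_\alpha-1)=\alpha p$ is the key here:
\[
-f^{p_\alpha-1}\nabla f\cdot\nabla\varphi \;\le\; \frac{1}{p\mu^p}\,||\nabla f||_*^p + \frac{\mu^q}{q}\,f^{\alpha p}\,||\nabla\varphi||^q.
\]
Integrating against $\omega\,dx$ and invoking the mass transport identity~\eqref{eq:push-forward1} to rewrite $\int_\Omega f(x)^{\alpha p}\,||\nabla\varphi(x)||^q\,\omega(x)\,dx=\int_\Omega g(y)^{\alpha p}\,||y||^q\,\omega(y)\,dy$, then rearranging, yields~\eqref{eq:nonhomogeneousGNineq} for every $\mu>0$.

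For the equality case and the corollaries: at $f=g=h_{\alpha,p,a}$ the Brenier map is the identity $\nabla\varphi(x)=x$ with $\varphi(x)=|x|^2/2$, so $D^2_A\varphi$ is the identity matrix and every step of Lemma~\ref{technical} is sharp; the integration by parts is now classical since $\partial_n\varphi=x_n\equiv 0$ on $\partial\Omega$. A direct computation from~\eqref{eq:ExtremalfunctionGN} together with~\eqref{eq:gradientofnorm} gives $-\nabla h_{\alpha,p,a}(x)=q\,||x||^{q-1}(\sigma_{\alpha,p,a}+(\alpha-1)||x||^q)^{\alpha/(1-\alpha)}\,x^*$, from which one reads off $X\cdot Y=||X||_*||Y||$ pointwise and the constant ratio $||X||_*^p/||Y||^q=q^p$; Young's saturation condition $||X||_*^p=\mu^{p+q}||Y||^q$ then forces $\mu^{p+q}=q^p$, i.e. $\mu=q^{1/q}=\mu_p$. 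Consequence (i) is immediate since the LHS of~\eqref{eq:nonhomogeneousGNineq} depends only on $g$ and the RHS only on $f$, with $h_{\alpha,p,a}$ extremal in both variational problems. For (ii), set $g=h_{\alpha,p,a}$ and apply the resulting inequality to the rescaled family $f_\lambda(x)=\lambda^{n_a/(\alpha p)}f(\lambda x)$ (which preserves the $L^{\alpha p}(\Omega,\omega)$ normalization); the exponents of $\lambda$ in front of $\int f_\lambda^{p_\alpha}\omega$ and $\int||\nabla f_\lambda||_*^p\omega$ have opposite signs under the hypotheses, so optimizing over $\lambda>0$ produces a product inequality of the form~\eqref{eq:GNpositive} or~\eqref{eq:GNcompact} with $\theta$ matching~\eqref{eq:theta} or~\eqref{eq:theta1}. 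The main obstacle will be the equality-case bookkeeping: one must verify both that Young's saturation pins down exactly $\mu=\mu_p$ and that, after the $\lambda$-optimization, the intricate explicit constants $G_{n,a}(\alpha,p)$ and $N_{n,a}(\alpha,p)$ in~\eqref{eq:explicitconstant}--\eqref{eq:explicitconstant1} fall out cleanly.
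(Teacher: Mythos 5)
Your proof follows the paper's own argument step for step: the same reduction to nonnegative compactly supported data, the same application of Lemma~\ref{technical} with $F=f^{\alpha p}$, $G=g^{\alpha p}$, $\gamma=p_\alpha/(\alpha p)$, the same Young's-inequality closing move exploiting $q(p_\alpha-1)=\alpha p$, the same push-forward identity to rewrite the resulting integral in terms of $g$, and the same equality analysis pinning $\mu=q^{1/q}$ via Young's saturation at $\nabla\varphi=\mathrm{id}$ (where your exponent $\alpha/(1-\alpha)$ in $\nabla h_{\alpha,p,a}$ is in fact the correct one; the paper has a typo there). The only cosmetic difference is that you spell out the scaling $f_\lambda(x)=\lambda^{n_a/(\alpha p)}f(\lambda x)$ for part (ii), whereas the paper simply refers to Theorem~4 of~\cite{CNV}, and your aside about ``sign handling when $1-\gamma<0$'' is unnecessary since $p_\alpha>0$ always and the sign of $1-\gamma$ is already absorbed into the lemma's coefficients.
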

\begin{proof}
As in the proof of Proposition \ref{plarge1}, we can assume that $f,g$ are nonnegative, compactly supported functions on $\overline{\Om}$, and that $f$ is $C^1(\overline{\Om})$. Let $\nabla\varphi$ be the Brenier map pushing $f^{\alpha p}\, \om\, dx$ forward to $g^{\alpha p}\, \om\, dx$. Applying Lemma \ref{technical} to 
$$F = f^{\alpha p},\quad\, G =g^{\alpha p},\quad\,\text{and}\,\quad \gamma =\frac{p_\alpha}{\alpha p},$$
we obtain
\begin{align}\label{eq:GNdual}
\frac{\alpha p}{\alpha-1}\,\int_\Om g^{p_\alpha}\, \om\, dx& \leq \frac{\alpha p -n_a(\alpha-1)}{\alpha -1}\, \int_\Om f^{\alpha p}\, \om\, dx\notag\\
&\,\,\,\,\,\, - p_\alpha \int_\Om \nabla f\cdot f^{\alpha(p-1)}\nabla\varphi\, \om\, dx.
\end{align} 
For all $\mu > 0$, applying Young's inequality~\eqref{eq:Youngineq}, we get
\begin{equation}\label{eq:GNdual1}
-\int_\Om \nabla f\cdot f^{\alpha(p-1)}\nabla\varphi\, \om\, dx \leq \frac{1}{p\mu^p} \int_\Omega||\nabla f||_*^p\, \om\, dx + \frac{\mu^q}{q}\int_\Omega |f|^{\alpha p}\, ||\nabla\varphi||^q\,\om\,dx.
\end{equation}
Plugging~\eqref{eq:GNdual1} into~\eqref{eq:GNdual}, and using the definition of mass transportation, we get~\eqref{eq:nonhomogeneousGNineq}.

When $f = g =h_{\alpha,p,a}$, we must have $\nabla\varphi(x) = x$. As in the proof of Proposition \ref{plarge1}, we have an equality in~\eqref{eq:GNdual}. Moreover, if $f = h_{\alpha,p,a}$,
$$\nabla f(x) = -q\left(\sigma_{\alpha,p,a} + (\alpha-1)||x||^q\right)_+^{\frac\alpha{\alpha-1}} ||x||^{q-1}x^*,$$
then
$$-\nabla f(x)\, \cdot \, f(x)^{\alpha p} \,\nabla\varphi(x) =q\, f(x)^{\alpha p}\, ||x||^q,$$
since $\nabla\varphi(x) =x$. We also have
$$||\nabla f(x)||_*^p = q^p\, f(x)^{\alpha p}\, ||x||^q,$$
and, for $\mu =\mu_p:= q^{\frac1q}$, we have
\begin{align*}
-\nabla f(x)\, \cdot \, f(x)^{\alpha p} \,\nabla\varphi(x)&=\frac{q}{p}\, f(x)^{\alpha p}\, ||x||^q + f(x)^{\alpha p}\, ||x||^q\\
&=\frac1{p\mu^p}\,||\nabla f(x)||_*^p + \frac{\mu^q}{q}\, f(x)^{\alpha p}\, ||\nabla\varphi(x)||^q.
\end{align*}
This shows that~\eqref{eq:GNdual1} becomes an equality, and then we have equality in~\eqref{eq:nonhomogeneousGNineq} when $f =g= h_{\alpha,p,a}$ and $\mu = \mu_p$.

The part $(i)$ is implied by~\eqref{eq:nonhomogeneousGNineq}. The proof of part $(ii)$ is similar the one of Theorem 4 in \cite{CNV}. This finishes the proof of Proposition \ref{GNineq}. 
\end{proof}

%%%%%%%%%%%%%%%%%%%%%%%%%%%%%%
%%%%%%%%%%%%%%%%%%%%%%%%%%%%%%%%%%%%%
%%%%%%%%%%%%%%%%%%%%%%%%%%%%%%%%%%%%%%%%%

\section{Deriving sharp GN inequalities on Euclidean space}

In this section, we use Proposition~\ref{maintheorem1} to derive part of, and to extend, a family of GN inequalities on $\R^n$ obtained in \cite{CNV, DD1, DD2}. The idea is to use the weighted Sobolev inequalities on $\R^n\times \R_+$ for the weight $\om(x) = x_{n+1}^a$ to derive a GN inequalities on $\R^n$ (for the usual Lebesgue measure). For the case $p=2$ and the Euclidean norm, this argument is well known (we learned it from D.~Bakry) and is recalled in~\cite{BGL}. 
With the $L^p$-weighted Sobolev in hand, it remains to adapt the argument to $p\neq 2$. 

We will also get  GN inequalities for arbitrary norms on $\R^n$. But note that even if one is interested only in the Euclidean norm $|\cdot|$ on $\R^n$, we need to use the weighted Sobolev inequality with a non-Euclidean norm $||\cdot||$ on $\R^{n+1}$ in the case $p\not=2$, namely 
$$||(x,t)|| :=\left(|x|^q + |t|^q\right)^{\frac1q},$$
with $q = \frac{p}{p-1}$. 

So let $||\cdot||$ be a norm on $\R^n$; its dual norm is denoted by $||\cdot||_*$. If $f\in \dot{W}^{1,p}(\R^n)$, we denote
$$||\nabla f||_{L^p(\R^n)} = \left(\int_{\R^n} ||\nabla f(x)||_*^p\, dx\right)^{\frac1p}.$$
We also denote here 
$$||f||_r = \left(\int_{\R^n} |f(x)|^r dx\right)^{\frac1r}$$
for $r\in \R\setminus\{0\}$.  We have:

\begin{theorem}\label{FromSobtoGNS}
Let $n\geq 1$,  $a\geq 0$, and  set $(n+1)_a = n+1+a$ as above and $ \alpha = \frac{np+a+1}{pn+a+1 -p^2}$. For any function $f\in \dot{W}^{1,p}(\R^n)$, we have

\item(i) If $1 <p <\frac{n+\sqrt{n^2 +4(1+a)}}{2}$, then
\begin{equation}\label{eq:CNV}
||f||_{\alpha p} \leq GN(n,p,a)||\nabla f||^\theta_{L^p(\R^n)}||f||_{\alpha p -\alpha +1}^{(1-\theta)},
\end{equation}
where $\theta\in (0,1)$ is equal to $\frac{n(\alpha -1)}{\alpha(np -(\alpha p+1-\alpha)(n-p))}$. The constant $GN(n,a,p)$ is optimal, and is given by
\begin{equation}\label{eq:bestconstant11}
 GN(n,p,a) =
\left[\frac{y(\alpha-1)^p}{q^{p-1}n}\right]^{\frac{\theta}p}\left[\frac{qy - n}{qy}\right]^{\frac{1}{\alpha p}}\left[\frac{\Gamma(y)}{\vol(K)\Gamma(y-\frac{n}{q})\Gamma(\frac{n}q +1)}\right]^{\frac{\theta}{n}},
\end{equation}
where $y = n +\frac{a+1-p}{p}$, and $K = \{x\in \R^n\, : \, ||x||\leq 1\}$.

\item(ii) If $\frac{n+\sqrt{n^2 +4(1+a)}}{2} < p <(n+1)_a$, then
\begin{equation}\label{eq:inverseCNV}
||f||_{\alpha p -\alpha +1}\leq GN(n,p,a)||\nabla f||_{L^p(\R^n)}^\theta ||f||_{\alpha p}^{1-\theta},
\end{equation}
where $\theta \in (0,1)$ is equal to $\frac{n(1-\alpha)}{(\alpha p -\alpha +1)(n-\alpha(n-p))}$. The constant $GN(n,a,p)$ is optimal and is given by
\begin{equation}\label{eq:bestconstant12}
GN(n,p,a)= \left[\frac{z(1-\alpha)^p}{q^{p-1}n}\right]^{\frac\theta{p}}\, \left[\frac{qz}{qz -n}\right]^{\frac{1-\theta}{\alpha p}}\, \left[\frac{\Gamma(z)}{\vol(K)\,\Gamma(\frac{n}q+1)\,\Gamma(z -\frac{n}q)}\right]^{\frac{\theta}{n}} ,
\end{equation}
where $z = -\frac{\alpha p}{1-\alpha} -1 = n+\frac{a+1-p}p$, and $K =\{x\in \R^n\, : \, ||x||\leq 1\}$.

Moreover, equality in~\eqref{eq:CNV} and~\eqref{eq:inverseCNV} holds if and only if
$$ f(x) = c\, (1+||\lambda(x-x_0)||^q)^{-\frac{1}{\alpha -1}},$$
for some $c\in \R$, $0\not=\lambda \in \R$ and $x_0\in \R^n$.
\end{theorem}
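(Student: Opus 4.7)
The strategy is a dimension reduction. I apply Proposition~\ref{maintheorem1} in ambient dimension $n+1$ to a two-variable function $F(x,t)$ built from the one-variable $f$, and read off the desired GN on $\R^{n}$ by integrating out the weight $t^{a}$. Concretely, equip $\R^{n+1}=\R^{n}\times\R$ with the (non-Euclidean when $p\neq 2$) norm $\|(x,t)\|=(|x|^{q}+t^{q})^{1/q}$, whose dual is $\|(\xi,\tau)\|_{*}=(|\xi|_{*}^{p}+|\tau|^{p})^{1/p}$, and the weight $\omega(x,t)=t^{a}$ on $\R^{n}\times\R_{+}$. The weighted Sobolev then has effective dimension $(n{+}1)_{a}=n{+}1{+}a$, critical exponent $P=(n{+}1{+}a)p/(n{+}1{+}a{-}p)$, and extremal $h_{p,a}(x,t)\propto (1+|x|^{q}+t^{q})^{-\beta}$ with $\beta=(n+1+a-p)/p$. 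My ansatz
\[
F(x,t)\;=\;f(x)^{\mu}\,\psi\!\bigl(t/f(x)^{\nu}\bigr),\qquad \psi(\sigma)=(1+\sigma^{q})^{-\beta},
\]
is tailored so that, for $f=f_{\mathrm{ext}}(x):=(1+|x|^{q})^{-1/(\alpha-1)}$ (the claimed GN extremal), a direct computation gives $F=(1+|x|^{q}+t^{q})^{-\beta}=h_{p,a}$; sharpness of the Sobolev inequality at $h_{p,a}$ will then descend to sharpness of the resulting GN at $f_{\mathrm{ext}}$.

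Changing variables $\sigma=t/f^{\nu}$ (so $t^{a}\,dt=f^{\nu(a+1)}\sigma^{a}\,d\sigma$) separates every integral as
\begin{align*}
\int\! F^{P}\omega\,dt\,dx &= J_{1}\!\!\int\! f^{\mu P+\nu(a+1)}\,dx,\\
\int\! |\partial_{t}F|^{p}\omega\,dt\,dx &= J_{2}\!\!\int\! f^{(\mu-\nu)p+\nu(a+1)}\,dx,\\
\int\! \|\nabla_{x}F\|_{*}^{p}\omega\,dt\,dx &= J_{3}\!\!\int\! f^{(\mu-1)p+\nu(a+1)}\|\nabla f\|_{*}^{p}\,dx,
\end{align*}
with $J_{1},J_{2},J_{3}$ explicit one-dimensional integrals of $\psi$, $\psi'$, $\mu\psi-\nu\sigma\psi'$ against $\sigma^{a}\,d\sigma$, all reducing to Beta functions via $u=\sigma^{q}$. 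Demanding the three exponents of $f$ to equal $\alpha p$, $\alpha(p-1)+1$ and $0$ respectively (so the Sobolev inequality reduces to a relation between $\|f\|_{\alpha p}$, $\|f\|_{\alpha(p-1)+1}$ and $\|\nabla f\|_{p}$) is a linear system in $(\mu,\nu)$; solving the first two conditions and imposing the third pins down $\mu=\beta(\alpha-1)$, $\nu=(1-\alpha)/q$, and $\alpha-1=p^{2}/[p(n-p)+a+1]$, i.e., precisely the value of $\alpha$ stated in the theorem.

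Proposition~\ref{maintheorem1} applied to $F$ then reads
\[
J_{1}^{p/P}\,\|f\|_{\alpha p}^{\alpha p\cdot p/P}\;\le\; S(n{+}1,a,p)^{p}\bigl[J_{3}\|\nabla f\|_{p}^{p}+J_{2}\|f\|_{\alpha(p-1)+1}^{\alpha(p-1)+1}\bigr].
\]
To convert the sum on the right into the product structure of a GN, I introduce a scale parameter by replacing $\psi(\sigma)$ with $\psi(\sigma/\rho)$ (equivalently, apply the Sobolev inequality to $F(x,\rho t)$); the integrals rescale as $J_{1},J_{3}\to\rho^{a+1}J_{1},J_{3}$ and $J_{2}\to\rho^{a+1-p}J_{2}$, and optimising over $\rho>0$ (or, equivalently, Young's inequality as in~\cite{CNV}) produces the GN with the exponent $\theta$ dictated by scaling; a short algebraic check using the formula for $\alpha-1$ confirms that this $\theta$ matches \eqref{eq:theta}, and the sharp constant is then extracted by evaluating $J_{1},J_{2},J_{3}$ explicitly and collapsing the Gamma ratios to \eqref{eq:bestconstant11}. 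Equality is automatic since $F$ becomes $h_{p,a}$ (up to dilation/translation) at $f=f_{\mathrm{ext}}$. The two cases $\alpha>1$ and $\alpha<1$ correspond to the sign of $p(n-p)+a+1$, which is positive exactly when $p<(n+\sqrt{n^{2}+4(a+1)})/2$; the argument is identical in both regimes, with only the roles of the $L^{\alpha p}$- and $L^{\alpha(p-1)+1}$-norms interchanging and the one-dimensional integrals evaluated using the compactly-supported (complementary Euler-Beta) profile in the $\alpha<1$ case, yielding \eqref{eq:inverseCNV} and \eqref{eq:bestconstant12}. The main obstacle is a regularity/approximation step: $F=f^{\mu}\psi(t/f^{\nu})$ need not be smooth or compactly supported up to $\{t=0\}$, so $f$ must be truncated away from its zero set (and, when $\alpha<1$, from $\partial\{f>0\}$) and the Sobolev inequality then passed to the limit --- a routine but unavoidable technicality, mirroring the density arguments of~\cite{CNV}.
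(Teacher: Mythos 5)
Your construction is essentially the paper's, just parametrized differently: the paper builds $g(x,t)=(h(x)+t^{q})^{-\beta}$ and substitutes $h=f^{-(\alpha-1)}$ afterwards, whereas you write the same object directly as $F=f^{\mu}\psi(t/f^{\nu})$ with $\psi(\sigma)=(1+\sigma^{q})^{-\beta}$; the relations $\mu=\beta(\alpha-1)$, $\nu=(1-\alpha)/q$ that you derive are exactly what turns $h^{-\beta}\psi(t/h^{1/q})$ into your ansatz under $h=f^{-(\alpha-1)}$, and your exponent matching reproduces $\alpha-1=p^{2}/(np+a+1-p^{2})$ as in the paper. The $\rho$-dilation of $\psi$ plays the same role as the paper's optimization over $\lambda f$: both introduce the missing scale needed to pass from the non-homogeneous inequality to the homogeneous GN and yield the same $\theta$, and both achieve equality at $\rho=1$ (resp.\ $\lambda=1$) when $f$ is the claimed extremal, so sharpness descends from Proposition~\ref{maintheorem1}. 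One correction: in case (ii) the range $p>(n+\sqrt{n^{2}+4(1+a)})/2$ gives $\alpha<0$, not $\alpha\in(0,1)$, so both $\alpha p$ and $\alpha(p-1)+1$ are \emph{negative} exponents; the profile $\psi$ remains the same decaying $(1+\sigma^{q})^{-\beta}$ (there is no switch to a compactly supported ``complementary Beta'' profile, which would correspond to $0<\alpha<1$ in the general Theorem~\ref{maintheorem2} but does not arise in this dimension-reduction). Apart from that slip, the argument matches the paper's proof.
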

Let us comment the results of Theorem \ref{FromSobtoGNS}:  
\begin{enumerate}
\item  In the case $(i)$ we have $\alpha \geq 0$, and moreover, if $p < n$ then
$$ 1< \alpha =\frac{np +a +1}{np+a+1 -p^2}\leq \frac{np +1}{np+ 1 -p^2}< \frac{n}{n-p}, \quad \forall\, a\geq 0. $$
More precisely, when $p<n$, $\alpha$ is a continuous function of $a$ that takes all the values between $1$ and $\frac{np +1}{np+ 1 -p^2}$, so we get (in the case of an arbitrary norm due to~\cite{CNV}) the family~\eqref{eq:DelpinoDolbeault} except for the values of $\alpha$ between $\frac{np +1}{np+ 1 -p^2}$ and $\frac{n}{n-p}$. 
Let us also mention that there is an inequality for the family of  $0< \alpha < 1$:
$$\left(\int_{\R^n}|f|^{\alpha(p-1)+1}dx\right)^{\frac1{\alpha (p-1)+1}}\leq G(n,p,\alpha)\left(\int_{\R^n}|\nabla f|^p dx\right)^{\frac\theta p} \left(\int_{\R^n}|f|^{\alpha p} dx\right)^{\frac{1-\theta}{\alpha p}},$$
where $\theta$ is determined by scaling invariance. The extremal functions in this case are compactly supported. We do not know how to derive these remaining cases with our approach.

\item The second remark is that our inequality~\eqref{eq:CNV} holds even when $p >n$. Indeed, since $\frac{n+\sqrt{n^2 +4(1+a)}}{2} > n$, the range in the statement $(i)$ of the Theorem contains a range where $p> n$.  This is different than the usual condition $p < n$ required in the classical GN inequalities, and seems to be new.

\item The case $(ii)$ of the Theorem concerns also  the range where $p>n$. However, note that inequality~\eqref{eq:inverseCNV} is a sharp GN type inequalities involving to $L^r-$ norm of functions with $r < 0$, since $\alpha p$ and $\alpha p -\alpha +1$ are both negative in this case.

\item Theorem \ref{FromSobtoGNS} is true in the dimension $1$. In this case, inequality~\eqref{eq:CNV} gives us a subfamily of sharp GN inequalities on the real line due to Agueh \cite{A1, A2}. In these two papers, Agueh investigated the sharp constants and optimal functions of GN inequalities involving the $L^p$ norm of the gradient by studying a $p-$Laplacian type equation. Indeed, the link between the sharp constants and mass transportation theory suggests a special change of functions that brings us to the solution of $p-$Laplacian type equations, in all generality when the dimension is $1$ and in some particular cases when $n >1$.

\item Finally, note that our method will allow us to characterize all cases of equality in the stated GN inequalities.

\end{enumerate}
\begin{proof}
As before, we denote $\Om = \R^n\times \R_+$ and $q = p/(p-1)$. Suppose $h$ is a nonnegative, smooth function on $\R^n$ such that $h(x)\to \infty$ when $|x|\to \infty$. We define a new function on $\Om$ by
$$ g(x,t) = \bigl(h(x)+t^q\bigl)^{-\frac{(n+1)_a-p}{p}},\quad (x,t)\in \Om.$$
We also define a new norm on $\R^{n+1}$ by
$$|||(x,t)||| = \left(||x||^q +|t|^q\right)^{\frac1q}.$$
Its dual norm is given by
$$|||(y,s)|||_* = \left(||y||_*^p +|s|^p\right)^{\frac1p}. $$
Applying Proposition \ref{maintheorem1} to $\Om = \R^n\times \R_+$ with norm $|||\cdot|||$ and weight $\om(x,t) =t^a$, we get
\begin{equation}\label{eq:GN1}
\bigg(\int_{\Om} g(x,t)^{\frac{(n+1)_ap}{(n+1)_a-p}}\,t^a\,dx\,dt\bigg)^{\frac{(n+1)_a-p}{(n+1)_a }}\leq S(n+1,a,p)\int_{\Om}|||\nabla g(x,t)|||_*^p\, t^a \,dx\,dt,
\end{equation}
and the equality holds true if and only if
$$ h(x) = a+ ||x -x_0||^q $$
for some $a >0$ and $x_0\in \R^n$. We have
\begin{align}\label{eq:GN2}
\int_{\Om} g(x,t)^{\frac{(n+1)_ap}{(n+1)_a-p}}\,t^a\, dx\,dt&=\int_{\Om} (h(x)+t^q)^{-(n+1)_a}\,t^a\,dx\,dt\notag\\ 
&=\int_0^\infty (1+t^q)^{-n-1-a}\,t^a\,dt\, \int_{\R^n}h(x)^{-n-\frac{a+1}{p}}\,dx \notag\\
&=S_1(n,a,p)\, \int_{\R^n}h(x)^{-n-\frac{a+1}{p}}\,dx .
\end{align}
Since
$$ \nabla g(x,t) = -\frac{(n+1)_a-p}{p}(h(x)+t^q)^{-\frac{(n+1)_a}{p}}(\nabla h(x), qt^{q-1}), $$
so
$$ \int_{\Om}|||\nabla g(x,t)|||_*^p\, t^a\, dx\,dt =\frac{((n+1)_a -p)^p}{p^p}\int_{\Om}(h(x)+t^q)^{-(n+1)_a}\left(||\nabla h(x)||_*^p+q^pt^q\right)\,t^a\,dx\,dt.$$
We now have
\begin{align}\label{eq:GN3}
\int_{\Om}(h(x)+t^q)^{-(n+1)_a}||\nabla h(x)||_*^p\,t^a\,dx\,dt& =\int_0^\infty\frac{t^a}{(1+t^q)^{n_a}} dt \, \int_{\R^n}||\nabla h(x)||_*^ph(x)^{-n-\frac{1+a}{p}}\,dx\notag\\ 
& = S_2(n,a,p)\, \int_{\R^n}||\nabla (h(x)^{-\frac{n}{p}-\frac{a+1}{p^2} +1})||_*^p\,dx,
\end{align}
and
\begin{align}\label{eq:GN4}
\int_{\Om} (h(x)+t^q)^{-n_a}\,t^{q+a}\,dx\,dt&=\int_0^\infty\frac{t^{q+a}}{(1+t^q)^{-n_a}}\,dt\, \int_{\R^n}h(x)^{-n-\frac{a+1 -p}{p}}\,dx\notag\\ 
&= S_3(n,a,p)\, \int_{\R^n}h(x)^{-n-\frac{a+1-p}{p}}\,dx.
\end{align}
Combining~\eqref{eq:GN1},~\eqref{eq:GN2},~\eqref{eq:GN3}, and~\eqref{eq:GN4}, we get
\begin{align}\label{eq:GN5}
\bigg(\int_{\R^n} h(x)^{-n-\frac{1+a}{p}}\,dx\bigg)^{\frac{n+1+a-p}{n+a+1}}&\leq A(n,a,p)\int_{\R^n} ||\nabla(h(x)^{-\frac{np+a+1-p^2}{p^2}})||_*^p\,dx\notag\\
& \quad\quad + B(n,a,p)\int_{\R^n} h(x)^{-n-\frac{a+1-p}{p}}\,dx,
\end{align}
where $A(n,a,p)$ and $B(n,a,p)$ are constants depending only on $n$, $a$ and $p$. Moreover, there is equality in~\eqref{eq:GN5} if and only if
$$ h(x) = a + ||x-x_0||^q,$$
for some $a >0$ and $x_0\in \R^n$. Changing $h$ to $f^{-\frac{p^2}{np+a+1 -p^2}}$ yields the following inequality:
\begin{equation}\label{eq:GN6}
\left(\int_{\R^n} f^{\alpha p}\,dx\right)^{\frac{n+a+1-p}{n+a+1}}\leq A(n,a,p)\int_{\R^n} ||\nabla f||_*^p\,dx + B(n,a,p)\int_{\R^n} f^{\alpha(p-1)+1}\,dx.
\end{equation}
The above changement of functions implies that equality in~\eqref{eq:GN6} holds if and only if
$$f(x) = (a+||x-x_0||^q)^{-\frac{1}{\alpha-1}},$$
for some $a>0$ and $x_0\in\R^n$. Inequality~\eqref{eq:GN6} is a nonhomogeneous form of GN inequalities. 
\begin{enumerate}
\item If $1 <p < \frac{n+\sqrt{n^2+4(1+a)}}2$ then $\alpha > 1$. Applying~\eqref{eq:GN6} to functions $\lambda f$, $\lambda > 0$, and optimizing over $\lambda > 0$ yields the following inequality
\begin{equation}\label{eq:DD}
||f||_{\alpha p} \leq C(n,a,p)\, ||\nabla f||_{L^p(\R^n)}^{\theta}\, ||f||_{\alpha (p-1)+1}^{1-\theta}.
\end{equation}
Changing $f$ by $f_\lambda(x) = f(\frac{x}\lambda)$ with $\lambda > 0$, we must have 
$$\theta = \frac{n(\alpha -1)}{\alpha(np -(\alpha p+1-\alpha)(n-p))}.$$
From the above proof, we see that $f(x) = (1+||x||^q)^{-\frac{1}{\alpha-1}}$ is an extremal function for~\eqref{eq:DD}. Then this inequality is optimal, and hence $C(n,a,p)$ is the best constant. 

\item If $\frac{n+\sqrt{n^2+4(1+a)}}2 < p < (n+1)_a$ then $\alpha < 0$. It is easy to check that $\alpha p- \alpha + 1 <0$ in this case. Applying~\eqref{eq:GN6} to functions $\lambda f$, $\lambda > 0$, and optimizing over $\lambda >0$ yields the following inequality
\begin{equation}\label{eq:DDDD}
||f||_{\alpha p -\alpha +1}\leq D(n,a,p)||\nabla f||_{L^p(\R^n)}^\theta ||f||_{\alpha p}^{1-\theta}.
\end{equation}
Changing $f$ by $f_\lambda(x) =f(\frac{x}\lambda)$ with $\lambda >0$, we must have
$$\theta = \frac{n(1-\alpha)}{(\alpha p -\alpha +1)(n-\alpha(n-p))}.$$
As above, we see that $f(x) = (1+||x||^q)^{-\frac{1}{\alpha-1}}$ is an extremal function for~\eqref{eq:DDDD}. Then this inequality is optimal, and hence $D(n,a,p)$ is the best constant.
\end{enumerate}

From the proof above, we see that~\eqref{eq:GN6} is equivalent to~\eqref{eq:DD} in the case $(i)$ (and~\eqref{eq:DDDD} in the case $(ii)$). Moreover, if $f$ is an optimal function to~\eqref{eq:DD} (also to~\eqref{eq:DDDD}) then $\lambda f$ is an optimal function to~\eqref{eq:GN6} for some $\lambda >0$. This shows that $f$ is of the form announced in the theorem.

A direct computation using function $f(x) = \left(1 +||x||^q\right)^{-\frac{1}{\alpha-1}}$ shows that the best constants $C(n,a,p)$ and $D(n,a,p)$ are given by~\eqref{eq:bestconstant11} and~\eqref{eq:bestconstant12} respectively.
\end{proof}

\section{A Generalization to $\R^{n-m}\times\R^m_+$ and application}
In this section, we denote $\Sigma =\R^{n-m}\times \R^m_+$ with $ n\geq m$ and $m\geq 1$. An element of $\Sigma$ is written as
$$(x,t) =(x_1,\cdots, x_{n-m}, t_1,\dots,t_m),\quad\, x_1\cdots, x_{n-m}\in \R^{n-m},\quad t_1,\cdots,t_m >0.$$
We consider a monomial weight $\sigma$ on $\Sigma$ of the form
$$\sigma(x,t) = t_1^{a_1}\cdots t_m^{a_m},$$
where $a_1,\cdots, a_m\geq 0$. For such $a_1,\cdots,a_m$, we denote $n_a =n + a_1+\cdots +a_m$ the corresponding fractional dimension of $(\Sigma,\sigma)$. For $1\leq p < n_a$, we denote 
$$p^* =\frac{n_ap}{n_a -p}.$$
As in the introduction, we denote, for $p \geq 1$, $\dot{W}^{1,p}(\Sigma,\sigma)$ the space of all measurable functions $f$ on $\Sigma$ such that its level sets $\{(x,t)\in \Sigma \, :\, |f(x,t)| > a\}$, $a >0$, have finite measure with respect to measure of density $\sigma$ on $\Sigma$, and its distributional gradient $\nabla f$ belongs to $L^p(\Sigma,\sigma)$. 

Let $||\cdot||$ be a norm on $\R^n$, and let $||\cdot||_*$ be its dual norm. For $f\in \dot{W}^{1,p}(\Sigma,\sigma)$, we define
$$||\nabla f||_{L^p(\Sigma,\sigma)} =\left(\int_\Sigma ||\nabla f(x,t)||_*^p\, \sigma(x,t)\, dxdt\right)^{\frac1p}.$$
Let us denote $B = \{(x,t)\in \R^n\, :\, ||(x,t)||\leq 1\}$, and introduce the functions
\begin{equation}\label{eq:exfun}
h_p (x,t) =
\begin{cases}
\left(\sigma_p + ||(x,t)||^q\right)^{\frac{n_a-p}{p}}&\mbox{ if $1 < p < n_a$}\\
\frac{\ind_{B\cap\Sigma}(x,t)}{\left(\int_{B\cap \Sigma} \sigma\, dxdt\right)^{\frac{n_a-1}{n_a}}}&\mbox{ if $p=1$},
\end{cases}
\end{equation}
where $\sigma_p$ is chosen such that
$$\int_\Sigma h_p(x,t)^{p^*}\, \sigma(x,t) \, dx\,dt = 1.$$
Unlike before,  we drop the indices relating to $m$ and $a_1,\cdots, a_m$, and norm $||\cdot||$ in the notation of these extremal functions.

As in the introduction, we can define the notion of function having $\sigma-$bounded variation on $\Sigma$, and define the weighted perimeter of a subset of $\R^n$ with respect to $\sigma$. 

We can now state a generalization of Propositions~\ref{maintheorem1} and~\ref{mainL1} which is already presented in~\cite{CR}, as discussed in the introduction. Actually, our proof gives a bit more, namely a duality principle analogue to the one in Proposition~\ref{plarge1}.

\begin{proposition}\label{maintheorem1'}
\item (i) Let $a_1,\cdots,a_m \geq 0$. If $1< p < n_a$, there exists a constant $S(n,m,a,p) $ such that for any  $f\in \dot{W}^{1,p}(\Sigma,\sigma)$, 
\begin{equation}\label{eq:swSobpm}
||f||_{L^{p^*}(\Sigma,\sigma)}\leq S(n,m,a,p)||\nabla f||_{L^p(\Sigma,\sigma)},
\end{equation}
where $p^* = \frac{n_a p}{n_a -p}$. The best constant $S(n,m,a,p)$ is given by
\begin{equation*}\label{eq:bestconstantm}
S(n,m,a,p) = \left(\frac{(p-1)^{p-1}}{n_a(n_a-p)^{p-1}}\right)^{\frac1{p}}\left[\frac{\Gamma(\frac{n_a}p)\Gamma(\frac{n_a(p-1)}{p}+1)}{\Gamma(n_a)} \int_{B\cap \Sigma} \sigma(x,t) \,dx\, dt\right]^{-\frac1{n_a}}
\end{equation*}
Equality in~\eqref{eq:swSobpm} holds if and only if
$$ f(x,t) = c\,h_{p}(\lambda(x-x_0,t)),$$
for some $c\in \R$, $\lambda >0$ and $x_0\in \R^{n-m}$.

\item (ii) When $p = 1$, there is a constant $S(n,m,a,1)$ such that for any smooth compactly supported function $f$, then 
\begin{equation}\label{eq:swSob1m}
||f||_{L^{\frac{n_a}{n_a -1}}(\Sigma,\sigma)}\leq S(n,m,a,1) ||\nabla f||_{L^1(\Sigma,\sigma)}.
\end{equation}
The best constant $S(n,m,a,1)$ is given by
\begin{equation*}\label{eq:bestconstant1m}
S(n,m,a,1) = n_a^{-1}\left(\int_{B\cap \Sigma} \sigma(x,t)\, dx\, dt\right)^{-\frac1{n_a}}.
\end{equation*}
The inequality~\eqref{eq:swSob1m} extends to all functions with $\sigma-$bounded variation, equality in~\eqref{eq:swSob1m} holds if  for some $c\in \R$, $\lambda > 0$ and $x_0\in \R^{n-m}$,
$$ f(x,t) = c\, h_{1}(\lambda(x-x_0,t)).$$
\end{proposition}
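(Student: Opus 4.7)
The strategy is to replicate the mass transport proof of Propositions~\ref{maintheorem1} and~\ref{mainL1}, with Lemma~\ref{technical} upgraded to handle the several boundary factors $t_1,\ldots,t_m$. The associated weighted Laplacian is now
\[
L \;=\; \Delta + \sum_{j=1}^{m} \frac{a_j}{t_j}\,\partial_{t_j},
\]
and for $F,G\geq 0$ with $\int F\,\sigma = \int G\,\sigma = 1$ and $F^\gamma\in C^1$ with $F,G$ compactly supported on $\overline{\Sigma}$, the Brenier map $\nabla\varphi$ pushing $F\sigma$ onto $G\sigma$ satisfies
\[
F(x,t) \sigma(x,t) \;=\; G(\nabla\varphi(x,t))\,\Bigl(\prod_{j=1}^{m}(\partial_{t_j}\varphi)^{a_j}\Bigr)\det D_A^{2}\varphi.
\]
Raising to the power $\gamma-1$ for $1-\tfrac{1}{n_a}\le \gamma\ne 1$, I would establish the pointwise inequality
\[
\frac{1}{1-\gamma}\prod_{j=1}^{m} A_j^{a_j(1-\gamma)}(\det M)^{1-\gamma}
\;\le\; \frac{1-n_a(1-\gamma)}{1-\gamma} + \sum_{j=1}^{m} a_j A_j + \tr(M),
\]
valid for any $A_1,\ldots,A_m>0$ and any nonnegative symmetric $n\times n$ matrix $M$. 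As in the $m=1$ case this follows from the weighted AM--GM inequality $\prod x_j^{\alpha_j}\le \sum \alpha_j x_j$ (with exponents $a_j(1-\gamma)$, $n(1-\gamma)$, $1-n_a(1-\gamma)$ summing to one for $\gamma<1$) combined with $(\det M)^{1/n}\le \tr(M)/n$; the case $\gamma>1$ reduces to the same family of inequalities after rearrangement exactly as in the proof of Lemma~\ref{technical}.

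After multiplying by $F\sigma$ and integrating, this yields the analogue
\[
\frac{1}{1-\gamma}\int_\Sigma G^\gamma \sigma \;\le\; \frac{1-n_a(1-\gamma)}{1-\gamma}\int_\Sigma F^\gamma \sigma + \int_\Sigma F^\gamma\, L_A\varphi\, \sigma.
\]
The integration by parts on the last term now requires cutoffs in each $t_j$ direction: set $\theta_{k}^{(j)}(x,t)=\theta(k t_j)$ with $\theta$ as in Lemma~\ref{technical}, take products $\Theta_k=\prod_j \theta_k^{(j)}$, and mollify. Because $\nabla\varphi$ sends $\Sigma$ into $\Sigma$ (the Brenier map respects the support of the target), we have $\partial_{t_j}\varphi\ge 0$ on $\{t_j=0\}\cap\overline{\Sigma}$, so each boundary contribution $\int F^\gamma \Theta_k^{(\text{complement})}\, k\theta'(kt_j)\,\partial_{t_j}\varphi\,\sigma$ is nonpositive and can be discarded, and Fatou's lemma together with $\Delta_A\varphi\le \Delta_{\mathcal{D}'}\varphi$ delivers
\[
\int_\Sigma F^\gamma L_A\varphi\,\sigma \;\le\; -\int_\Sigma \nabla F^\gamma\cdot\nabla\varphi\,\sigma.
\]
This is the natural extension of inequality~\eqref{eq:integraionbypart}.

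With this weighted central lemma in hand, the proofs of~\eqref{eq:swSobpm} and~\eqref{eq:swSob1m} follow the templates of Proposition~\ref{plarge1} and Proposition~\ref{p=1} verbatim: apply the lemma with $F=f^{p^*}$, $G=h_p^{p^*}$ and $\gamma=1-1/n_a$, then use H\"older's inequality~\eqref{eq:Holderineq} (resp.\ the bound $\|\nabla\varphi\|\le 1$ for $p=1$), and invoke the mass-transport identity $\int_\Sigma f^{p^*}\|\nabla\varphi\|^q\sigma = \int_\Sigma h_p^{p^*}\|y\|^q\sigma$. Equality when $f=h_p$ comes from the identity $\nabla\varphi(x,t)=(x,t)$ exactly as before, together with $\partial_{t_j}\varphi=t_j$ ensuring no boundary loss. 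For the characterization of equality in part~(i), I would reproduce the argument of Proposition~\ref{equality}: approximate an optimal $f$ by $f_{k,\epsilon,\delta}$ constructed with scaling and mollification, pass to the limit to obtain the key bound~\eqref{eq:IbP2} in this multi-weighted context, then exploit equality in H\"older's inequality and in the AM--GM inequality above. The latter forces $D^2_{\mathcal{D}'}\varphi=\lambda\,\mathrm{Id}_n$ and $\partial_{t_j}\varphi/t_j=\lambda$ for each $j$ with $a_j>0$, giving $\nabla\varphi(x,t)=\lambda(x-x_0,t)$ on the support of $f^{p^*}$ and the stated extremals.

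The main technical obstacle I foresee is the multi-index integration-by-parts step: one must justify the cancellation of all $m$ boundary contributions simultaneously, for which the sign condition $\partial_{t_j}\varphi\ge 0$ on each hyperplane $\{t_j=0\}$ is essential; this in turn relies on the fact that the Brenier map between two measures supported in $\overline{\Sigma}$ stays in $\overline{\Sigma}$, a property one verifies via the a.e.\ Monge--Amp\`ere identity and the nonnegativity of the Jacobian factors $(\partial_{t_j}\varphi)^{a_j}$. Once this is set up carefully, the remainder of the argument (best constants, $L^1$-BV extension via approximation, and geometric interpretation in terms of the weighted perimeter of $B$) is a direct transcription of Propositions~\ref{plarge1},~\ref{equality} and~\ref{p=1}.
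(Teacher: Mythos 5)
Your proposal follows exactly the paper's route: the paper's Section~4 states Lemma~\ref{technicalm} as the multi-weight generalization of Lemma~\ref{technical}, observes that the only change needed in its proof is to replace the cutoff $\theta_k(x_n)$ by the product $\theta_k(t_1)\cdots\theta_k(t_m)$, and then declares that the proofs of Proposition~\ref{maintheorem1'} and Theorem~\ref{maintheorem2'} proceed exactly as for Propositions~\ref{plarge1},~\ref{equality} and~\ref{p=1}. Your outline supplies the details that the paper compresses (the generalized pointwise AM--GM inequality with exponents $a_1(1-\gamma),\ldots,a_m(1-\gamma),n(1-\gamma),1-n_a(1-\gamma)$; the sign argument for the $m$ boundary terms), and the only small imprecision is your phrasing that $\partial_{t_j}\varphi\ge 0$ ``on $\{t_j=0\}\cap\overline{\Sigma}$'' --- what is actually used, and what holds, is that $\partial_{t_j}\varphi\ge 0$ everywhere on the support of $F\sigma\,dx\,dt$, since the Brenier map carries this support into $\overline{\Sigma}=\overline{\nabla\varphi(\supp F\sigma\,dx\,dt)}\supset\supp G\sigma\,dx\,dt$; the boundary terms are concentrated on $\{1/k\le t_j\le 2/k\}$ rather than on the hyperplanes themselves.
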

 
As before, the case $p=1$ in Proposition \ref{maintheorem1'} is equivalent to a weighted isoperimetric inequality on $\Sigma$, that is, among all subsets $E$ of $\R^n$ such that $\int_{E\cap \Sigma} \sigma(x,t)\, dxdt $ is equal to $\int_{B\cap \Sigma}\sigma(x,t)\, dxdt$ then $B$ has smallest weighted perimeter.

To generalize Theorem \ref{maintheorem2}, we introduce the following family of functions, for $0< \alpha \not=1$,
\begin{equation*}
h_{p,\alpha}(x,t) = \left(\sigma_p + (\alpha-1)||(x,t)||^q\right)_+^{\frac1{1-\alpha}},
\end{equation*}
where $\sigma_p$ is chosen such that
$$\int_\Sigma h_{p,\alpha}(x,t)^{\alpha p}\, \sigma(x,t)\, dx\, dt =1.$$
We now can state a generalization of Theorem \ref{maintheorem2} as follows:
\begin{theorem}\label{maintheorem2'}
Let $a_1,\cdots, a_m\geq 0$, $p\in (1,n_a)$ and $\alpha\in (0,\frac{n_a}{n_a -p}]$, $\alpha \not= 1$.
\item (i) If $\alpha >1$, there exists a constant $G_{n,m,a}(\alpha,p)$ such that for any $f\in \dot{W}^{1,p}(\Sigma,\sigma)$,
\begin{equation}\label{eq:swGNineqm}
||f||_{L^{\alpha p}(\Sigma,\sigma)} \leq G_{n,m,a}(\alpha,p)||\nabla f||_{L^p(\Sigma,\sigma)}^\theta ||f||_{L^{\alpha(p-1)+1}(\Sigma,\sigma)}^{1-\theta},
\end{equation} 
where
\begin{equation*}\label{eq:thetam}
 \theta =\frac{n_a(\alpha-1)}{\alpha(n_ap -(\alpha p+1 -\alpha)(n_a -p))} =\frac{p^*(\alpha -1)}{\alpha p(p^* -\alpha p+\alpha -1)}, 
\end{equation*}
the best constant $G_{n,m,a}(\alpha,p)$ takes the explicit form, denoting $y =\frac{\alpha(p-1) +1}{\alpha -1}$
\begin{equation*}\label{eq:explicitconstantm}
G_{n,m,a}(\alpha,p) = \left[\frac{y(\alpha-1)^p}{q^{p-1}n_a}\right]^{\frac{\theta}p}\left[\frac{qy - n_a}{qy}\right]^{\frac{1}{\alpha p}}\left[\frac{\Gamma(y)}{\Gamma(y-\frac{n_a}{q})\Gamma(\frac{n_a}{q}+1)\int_{B\cap \Sigma} \sigma\, dx dt}\right]^{\frac{\theta}{n_a}}.
\end{equation*}
Equality in~\eqref{eq:swGNineqm} holds if 
$$ f(x) = c\, h_{p,\alpha}(\lambda(x-x_0,t)),$$
for some $c\in \R$, $\lambda >0$ and $x_0\in \R^{n-m}$.
\item (ii) If $\alpha < 1$, there exists a constant $N_{n,m,a}(\alpha,p)$ such that for any $f\in \dot{W}^{1,p}(\Omega,\omega)$
\begin{equation}\label{eq:swGNineq1m}
||f||_{L^{\alpha (p -1) +1}(\Sigma,\sigma)}\leq N_{n,m,a}(\alpha,p)||\nabla f||_{L^p(\Sigma,\sigma)}^\theta ||f||_{L^{\alpha p}(\Sigma,\sigma)}^{1-\theta},
\end{equation}
where
\begin{equation*}\label{eq:theta1m}
\theta =\frac{n_a(1-\alpha)}{(\alpha p+1-\alpha)(n-\alpha(n-p))} = \frac{p^*(1-\alpha)}{(p^*-\alpha p)(\alpha p+ 1-\alpha)},
\end{equation*}
the best constant $N_{n,m,a}(\alpha,p)$ takes the explicit form, denoting $z = \frac{\alpha p -\alpha +1}{1 -\alpha }$,
\begin{equation*}\label{eq:explicitconstant1m}
N_{n,m,a}(\alpha,p) = \left[\frac{z(1-\alpha)^p}{q^{p-1}n_a}\right]^{\frac{\theta}p}\left[\frac{qz}{qz+n_a}\right]^{\frac{1-\theta}{\alpha p}}\left[\frac{\Gamma(z+1+\frac{n_a}{q})}{\Gamma(z+1)\Gamma(\frac{n_a}{q}+1)\int_{B\cap\Sigma}\sigma \, dx dt}\right]^{\frac{\theta}{n_a}}.
\end{equation*}
Equality in~\eqref{eq:swGNineq1m} holds if 
$$ f(x) = c\, h_{p,\alpha}(\lambda(x-x_0,t)),$$
for some $c\in \R$, $\lambda > 0$ and $x_0\in \R^{n-m}$.
\end{theorem}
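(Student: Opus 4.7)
\textbf{Proof proposal for Theorem \ref{maintheorem2'}.}
The plan is to rerun the mass-transport argument of Section~\S 2 verbatim, but with the single weight $\omega(x)=x_n^a$ replaced by the monomial weight $\sigma(x,t)=\prod_{j=1}^{m} t_j^{a_j}$ and with the ``vertical'' variable $x_n$ replaced by the vector $t=(t_1,\dots,t_m)$ of positive coordinates. The central object is a generalized version of Lemma~\ref{technical}: for $1\neq\gamma\ge 1-1/n_a$, if $F,G$ are compactly supported in $\overline{\Sigma}$ with $\int_{\Sigma}F\sigma=\int_{\Sigma}G\sigma=1$ and $\nabla\varphi$ is the Brenier map pushing $F\sigma\,dxdt$ onto $G\sigma\,dxdt$, then
\[
\frac{1}{1-\gamma}\int_\Sigma G^\gamma\sigma\,dxdt\leq \frac{1-n_a(1-\gamma)}{1-\gamma}\int_\Sigma F^\gamma\sigma\,dxdt-\int_\Sigma \nabla F^\gamma\cdot\nabla\varphi\,\sigma\,dxdt.
\]
Once this is at hand, the three parts of Theorem~\ref{maintheorem2'} follow line by line from the arguments in Section~\S 2: the weighted Sobolev inequality by taking $\gamma=1-1/n_a$ with $F=f^{p^\ast}$, $G=g^{p^\ast}$ and applying H\"older; the $L^1$-case and the isoperimetric statement by approximation and the observation that $\|\nabla\varphi\|\le 1$ when the target is $h_1$; the Gagliardo--Nirenberg inequalities by choosing $\gamma=p_\alpha/(\alpha p)$, $F=f^{\alpha p}$, $G=g^{\alpha p}$ and combining with Young's inequality exactly as in Proposition~\ref{GNineq}.

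To establish the generalized lemma, I would use the Monge--Amp\`ere identity in the form
\[
F(x,t)\prod_{j=1}^{m}t_j^{a_j}=G(\nabla\varphi(x,t))\prod_{j=1}^{m}(\partial_{t_j}\varphi)^{a_j}\det D^2_A\varphi,
\]
valid $F\sigma$-a.e., which makes sense because the image of $\nabla\varphi$ lies in $\overline{\Sigma}$ and so the $t_j$-components of $\nabla\varphi$ are nonnegative. Raising to the power $1-\gamma$ and rearranging reduces the desired pointwise inequality to the elementary estimate
\[
\frac{1}{1-\gamma}\Bigl(\prod_{j=1}^{m}A_j^{a_j}\Bigr)^{1-\gamma}(\det M)^{1-\gamma}\le \frac{1-n_a(1-\gamma)}{1-\gamma}+\sum_{j=1}^{m}a_jA_j+\tr M,
\]
for $A_j>0$ and $M\succeq 0$ symmetric $n\times n$. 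This is exactly \eqref{eq:them} with the single term $aA$ split into $m$ terms: it follows from the weighted arithmetic-geometric mean applied to the $m+2$ quantities $A_1,\dots,A_m$, $(\det M)^{1/n}$ and $1$ with weights $a_j(1-\gamma)$, $n(1-\gamma)$ and $1-n_a(1-\gamma)$, combined with $(\det M)^{1/n}\le \tr(M)/n$. The case $\gamma>1$ is handled by the same reverse-AM-GM reformulation as in Section~\S 2.

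The integration-by-parts step requires more care because the boundary of $\Sigma$ has $m$ facets $\{t_j=0\}$. I would introduce an iterated cutoff $\Theta_k(t)=\prod_{j=1}^{m}\theta(kt_j)$ with the same $\theta$ as in the proof of Lemma~\ref{technical}, mollify $F^\gamma\Theta_k$, integrate by parts, and let the mollification parameter and then $k\to\infty$. The boundary terms that appear are of the form
\[
-k\int_\Sigma F^\gamma\Bigl(\prod_{i\ne j}\theta(kt_i)\Bigr)\theta'(kt_j)\,\partial_{t_j}\varphi\,\sigma\,dxdt,
\]
each of which is nonpositive because $\theta$ is nondecreasing and $\partial_{t_j}\varphi\ge 0$ on the support of $F$ (since $\nabla\varphi\in\overline{\Sigma}$). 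This is the main technical point; once these signs are tracked correctly, Fatou's lemma delivers the desired bound. The characterization of equality, for both the Sobolev case and the GN case, is identical to the proof of Proposition~\ref{equality}: trace equality back through H\"older/Young and the AM-GM used above to force $D^2_{\mathcal{D}'}\varphi=\lambda\,\mathrm{id}_n$ and $\partial_{t_j}\varphi/t_j=\lambda$ for every $j$, which (when all $a_j>0$) forces $\nabla\varphi(x,t)=\lambda(x-x_0,t)$ and hence identifies the extremizer up to the advertised translation-in-$x$ and scaling.
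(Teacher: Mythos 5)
Your proposal is correct and follows essentially the same route as the paper: it reduces Theorem~\ref{maintheorem2'} to a generalization of Lemma~\ref{technical} obtained by replacing the single factor $A^{a(1-\gamma)}$ by $\bigl(\prod_j A_j^{a_j}\bigr)^{1-\gamma}$ in the elementary AM--GM step and by using the iterated cutoff $\prod_j\theta(kt_j)$ to justify integration by parts across the $m$ facets of $\partial\Sigma$, which is precisely the paper's Lemma~\ref{technicalm} and its indicated proof. The passage from that lemma to the Gagliardo--Nirenberg inequality via $\gamma=p_\alpha/(\alpha p)$ and Young's inequality, and the equality analysis forcing $D^2_{\mathcal{D}'}\varphi=\lambda\,\mathrm{id}_n$ and $\partial_{t_j}\varphi/t_j=\lambda$, also coincide with the paper's argument as carried out in Proposition~\ref{GNineq} and Proposition~\ref{equality}.
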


The proofs of Proposition~\ref{maintheorem1'} and~Theorem~\ref{maintheorem2'} are similar to their companion stated on $\Omega$. The proof relies on the following lemma which is a generalization of Lemma \ref{technical}.

\begin{lemma}\label{technicalm}
Let $a_1,\cdots, a_m\geq 0$, and $1\not=\gamma \geq 1 -\frac1{n_a}$. Let $F$ and $G$ be two nonegative functions on $\Sigma$ with $\int_\Sigma F\,\sigma\, dx dt =\int_\Sigma G\, \sigma\, dx dt =1$, $F^\gamma$ is $C^1$ on $\Sigma$ and $F,G$ are compactly supported on $\overline{\Sigma}$. Then if  $\nabla\varphi$ is the Brenier map pushing $F\,\sigma\, dx dt$ forward to $G\,\sigma\, dx dt$, we have 
\begin{equation*}\label{eq:integraionbypartm}
\frac{1}{1-\gamma}\int_\Sigma G^\gamma \, \sigma \, dx dt\leq \frac{1-n_a(1-\gamma)}{1-\gamma}\int_\Sigma F^\gamma\, \sigma\, dx dt -\int_\Sigma \nabla F^\gamma\cdot \nabla\varphi\, \sigma\, dx dt.
\end{equation*}
\end{lemma}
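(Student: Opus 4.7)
The plan is to follow the architecture of the proof of Lemma~\ref{technical} line by line, replacing the single weight $x_n^a$ by the monomial $\sigma(x,t)=\prod_{j=1}^m t_j^{a_j}$ and paying attention to the boundary components $\{t_j=0\}$, one for each $j$. Writing the Brenier map as $\nabla\varphi=(\nabla_x\varphi,\partial_{t_1}\varphi,\dots,\partial_{t_m}\varphi)$, Monge--Amp\`ere~\eqref{eq:MAequation1} becomes
\[
F(x,t)\,\sigma(x,t) \;=\; G(\nabla\varphi(x,t))\,\sigma(\nabla\varphi(x,t))\,\det D^2_A\varphi(x,t),
\]
so that
\[
G(\nabla\varphi)^{\gamma-1} \;=\; F^{\gamma-1}\,\prod_{j=1}^m\!\left(\frac{\partial_{t_j}\varphi}{t_j}\right)^{\!a_j(1-\gamma)}\!(\det D^2_A\varphi)^{1-\gamma}.
\]
The reduction therefore is to the pointwise matrix inequality
\begin{equation}\label{eq:themm}
\frac{1}{1-\gamma}\prod_{j=1}^m A_j^{a_j(1-\gamma)}(\det M)^{1-\gamma} \;\leq\; \frac{1-n_a(1-\gamma)}{1-\gamma} + \sum_{j=1}^m a_j A_j + \tr(M),
\end{equation}
for $A_1,\dots,A_m>0$ and $M$ an $n\times n$ nonnegative symmetric matrix, which would correspond to \eqref{eq:them} in the one-weight case.

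Inequality~\eqref{eq:themm} is proved by the same two-inequality trick used in the proof of Lemma~\ref{technical}. In the case $1-\tfrac{1}{n_a}\le \gamma<1$, I write the left-hand side as a product of $m+2$ factors with exponents $a_j(1-\gamma)$, $n(1-\gamma)$ and $1-n_a(1-\gamma)$ (all nonnegative and summing to $1$), apply the weighted arithmetic--geometric inequality to bound it by the corresponding convex combination, and finish with $(\det M)^{1/n}\le \tfrac{1}{n}\tr(M)$. In the case $\gamma>1$, I first rewrite~\eqref{eq:themm} in the equivalent form (divide both sides by $1+n_a(\gamma-1)$ after moving terms) and then apply the weighted AM--GM in reverse with the weights $\tfrac{1}{1+n_a(\gamma-1)}$, $\tfrac{a_j(\gamma-1)}{1+n_a(\gamma-1)}$, $\tfrac{n(\gamma-1)}{1+n_a(\gamma-1)}$, exactly as in the $m=1$ case.

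Multiplying~\eqref{eq:themm} by $F\sigma$ and integrating, one obtains the analogue of~\eqref{eq:important}, namely
\[
\frac{1}{1-\gamma}\int_\Sigma G^\gamma\,\sigma\,dx\,dt \;\leq\; \frac{1-n_a(1-\gamma)}{1-\gamma}\int_\Sigma F^\gamma\,\sigma\,dx\,dt + \int_\Sigma F^\gamma\,L_A\varphi\,\sigma\,dx\,dt,
\]
with $L_A\varphi = \sum_{j=1}^m a_j\,\partial_{t_j}\varphi/t_j + \Delta_A\varphi$ being the Laplacian naturally associated with $\sigma$. What remains is the integration by parts step, and this is where the main technical change lies: we must cut off near each of the $m$ coordinate hyperplanes $\{t_j=0\}$. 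The plan is to introduce the multi-cutoff
\[
\Theta_k(x,t) \;=\; \prod_{j=1}^m \theta(k\,t_j),
\]
with the same $\theta$ as in the proof of Lemma~\ref{technical}, set $F_k=F^\gamma\Theta_k$, then mollify $F_k$ to get functions in $C_0^\infty(\Sigma)$, use the distributional inequality $\Delta_A\varphi\le \Delta_{\mathcal D'}\varphi$ and integrate by parts. When differentiating $\Theta_k$ the product rule produces $m$ boundary-type error terms
\[
-k\int_\Sigma F^\gamma\,\theta'(kt_j)\prod_{i\neq j}\theta(kt_i)\,\partial_{t_j}\varphi\,\sigma\,dx\,dt,
\]
each of which is nonpositive because $\theta$ is increasing and, since $\nabla\varphi(\Sigma)\subset\Sigma$, each $\partial_{t_j}\varphi$ is nonnegative on the support of $F$; thus they can be dropped.

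The main obstacle is precisely this last step: making sure that the integration by parts survives the presence of \emph{several} boundary faces $\{t_j=0\}$ simultaneously, and that the terms coming from the product-rule derivatives of $\Theta_k$ all have the right sign. The positivity of each $\partial_{t_j}\varphi$ on $\supp F$, which follows from $\nabla\varphi(x,t)\in\Sigma$ for $(x,t)\in\supp F$, is exactly what resolves this. Once the $m$ error terms are discarded and $k\to\infty$ is passed via Fatou's lemma and the boundedness of $\nabla\varphi$ (which is guaranteed by compact support of $F,G$ on $\overline\Sigma$), one recovers
\[
\int_\Sigma F^\gamma\,L_A\varphi\,\sigma\,dx\,dt \;\leq\; -\int_\Sigma \nabla F^\gamma\cdot\nabla\varphi\,\sigma\,dx\,dt,
\]
which combined with the pointwise inequality yields the desired statement.
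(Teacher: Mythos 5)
Your proof is correct and follows essentially the same approach as the paper, which simply asserts that the lemma is proved in the same way as Lemma~\ref{technical} once the cutoff is replaced by $F_k = F^\gamma\,\theta_k(t_1)\cdots\theta_k(t_m)$. Your write-up fills in exactly the details the paper leaves implicit: the $(m+2)$-factor weighted AM--GM inequality generalizing~\eqref{eq:them}, and the observation that each of the $m$ cutoff-error terms is nonpositive because $\partial_{t_j}\varphi\ge 0$ on $\supp F$.
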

This lemma is proved  in the same way as Lemma~\ref{technical}. When we arrive to  the step of justifying the integration by parts, we define the function $F_k$ by 
$$F_k(x,t) =F^\gamma(x,t)\, \theta_k(t_1)\cdots\theta_k(t_m),$$
where $\theta_k$ is defined in the proof of Lemma \ref{technical}.

We conclude this section by studying $L^p$-logarithmic Sobolev inequality announced in the introduction. More precisely,
we will explain how to use Proposotion~\ref{maintheorem1'} to prove Proposition~\ref{LplogSobineq}. 

Let $\Omega$, $a \geq 0$, $\om$ and $p\geq 1$ be as in Proposition \ref{LplogSobineq}. For $k\geq 1$, define
$$\Om_k = \underbrace{\Om\times\cdots \times \Om}_{k \text{ times}}.$$
An element of $\Om_k$ is written by $(\x^1,t_1,\cdots, \x^k, t_k)$ with $\x^1,\cdots, \x^k \in \R^{n-1}$ and $t_1, \cdots, t_k > 0$. Let $\om_k$ be the weight function on $\Om_k$ given by
$$\om_k(\x^1,t_1,\cdots, \x^k, t_k) = t_1^a \cdots t_k^a.$$
If $||\cdot||$ is a norm on $\R^n$, we define a new norm on $\R^{nk}$ by 
$$|||(x^1,\cdots,x^k)||| = \left(\sum_{i=1}^k ||x^i||^q\right)^{\frac1q}.$$
Let $B_k$ denote its unit ball. An easy computation shows that the dual norm of $|||\cdot|||$ is given by
$$|||(y^1,\cdots, y^k)|||_* = \left( \sum_{i=1}^k ||y^i||_*^p\right)^{\frac1p}.$$
Choosing $k$ such that $k(n+a) > p$, applying Proposition \ref{maintheorem1'} to the function
$$f_k(\x^1,t_1,\cdots,\x^k,t_k) =  f(\x^1,t_1)\times\cdots\times f(\x^k,t_k),$$
we obtain
\begin{equation}\label{eq:ApplSob}
\left(\int_\Om f^{\frac{kn_ap}{kn_a -p}}\, \om\, dx\right)^{\frac{kn_a-p}{n_a p}} \leq k^{\frac1p}S(kn,k,a,p)\,||\nabla f||_{L^p(\Om,\om)}.
\end{equation}
It is easy to prove that
$$\int_{B_k\cap \Om_k} \om_k =\frac{q}{k n_a}\left(\frac{n_a}q \int_{B\cap\Om} \om\, dx\right)^k  \frac{\Gamma(\frac{n_a}q)^k}{\Gamma(\frac{kn_a}q)}.$$
Using the Stirling's formula, we get
$\Gamma(x)^{\frac1x}\sim \frac{x}e$, and so
$$\lim_{k\to\infty} k^{\frac1p}\, S(nk,k,a,p) = \left[\frac{p}{n_a}\left(\frac{p-1}e\right)^{p-1}\right]^{\frac1p} \left(\Gamma(\frac{n_a}q +1)\int_{B\cap \Om} \om\, dx\right)^{-\frac1{n_a}}.$$
Taking logarithmic both sides of~\eqref{eq:ApplSob} and let $k\to\infty$, we get~\eqref{eq:LplogSob}.

Note that this proof can be used to generalize Proposition \ref{LplogSobineq} to the domain $\Sigma$ with the weight $\sigma$ given by~\eqref{eq:generalweight}.

%%%%%%%%%%%%%%%%%%%%%%%%%%%%%%%%Cam on
\subsection*{Acknowledgment}
I am very grateful to my advisor Dario Cordero-Erausquin for his encouragements, his careful review of this manuscript, and for useful discussions on this problem.

%Institut de Math\'ematiques de Jussieu, UPMC, 4 Place Jussieu, 75252 Paris, France\\

%Email: vanhoang@math.jussieu.fr

\end{document}